\newtheorem{thm}{Theorem}[section]
\newtheorem{cor}[thm]{Corollary}
\newtheorem{lem}[thm]{Lemma}
\newtheorem{prop}[thm]{Proposition}
\newtheorem{dfn}[thm]{Definition}
\newtheorem{rem}[thm]{Remark}
\newenvironment{pr}[1][Proof]{\noindent\textbf{#1.} }{\ \rule{0.5em}{0.5em}}
\newcommand{\Spec}{\operatorname{Spec}}
\newcommand{\Hom}{\operatorname{Hom}}
\newcommand{\Sym}{\operatorname{Sym}}
\newcommand{\im}{\operatorname{im}}
\newcommand{\rk}{\operatorname{rk}}
\newcommand{\GL}{\operatorname{GL}}
\newcommand{\SL}{\operatorname{SL}}
\newcommand{\PGL}{\operatorname{PGL}}
\numberwithin{equation}{section}
\begin{document}

\title[Harder-Narasimhan filtration rk $2$ tensors and stable coverings] {Harder-Narasimhan filtration for rank $2$ tensors and stable coverings}
\author[A. Zamora]{Alfonso Zamora}
%\date{12 June 2013}

\address{Instituto de Ciencias Matem\'aticas (CSIC-UAM-UC3M-UCM),
Nicol\'as Cabrera 13-15, Campus Cantoblanco UAM, 28049 Madrid,
Spain}

\address{Departamento de Matem\'atica, Instituto Superior T\'ecnico, Av. Rovisco Pais
1049-001 Lisboa, Portugal}

\email{alfonsozamora@icmat.es, alfonsozamora@tecnico.ulisboa.pt}

\begin{abstract}
We construct a Harder-Narasimhan filtration for rank $2$ tensors,
where there does not exist any such notion a priori, as coming
from a GIT notion of maximal unstability. The filtration
associated to the 1-parameter subgroup of Kempf giving the maximal
way to destabilize, in the GIT sense, a point in the parameter
space of the construction of the moduli space of rank $2$ tensors
over a smooth projective complex variety, does not depend on
certain integer used in the construction of the moduli space, for
large values of the integer. Hence, this filtration is unique and
we define the Harder-Narasimhan filtration for rank $2$ tensors as
this unique filtration coming from GIT. Symmetric rank $2$ tensors
over smooth projective complex curves define curve coverings lying
on a ruled surface, hence we can translate the stability condition
to define stable coverings and characterize the Harder-Narasimhan
filtration in terms of intersection theory.

$ $

\textbf{Keywords:} Harder-Narasimhan filtration, GIT, tensors,
curve coverings, moduli space, Kempf

$ $

\textbf{Mathematics Subject Classification (2010):} 14D20, 14J60, 14L24, 14E20

\end{abstract}

\maketitle

\section*{Introduction}
This article is part of the research developed in the author's Ph.D.
Thesis (c.f. \cite{Za2}) and it is a continuation of \cite{GSZ}.
In a moduli problem, usually, we impose a notion of stability for
the objects in order to obtain a moduli space with good
properties. When constructing the moduli space using Geometric
Invariant Theory, a notion of GIT stability for the orbits appears
and, to obtain the moduli space it is shown, at some point, that
both notions of stability do coincide.

Harder and Narasimhan construct a canonical filtration (c.f.
\cite{HN}) for unstable sheaves, named after them, which maximally
contradicts the definition of stability we impose in the
construction of the moduli space. On the other hand, there has
been some results in the literature, trying to find the best way
of destabilizing an orbit in the GIT sense (c.f. \cite{GIT, He,
Ke}). The GIT stability is checked by $1$-parameter subgroups, by
the classical Hilbert-Mumford criterion, and it turns out that
there exists, up to some rescaling, a unique $1$-parameter
subgroup giving a notion of maximal GIT unstability. That special unique $1$-parameter subgroup produces a
filtration in a natural way, which we call Kempf filtration, based
on results of \cite{Ke} (c.f. section \ref{kempfsection}). The immediate question is whether the
Harder-Narasimhan filtration and the Kempf filtration do coincide.

In \cite{GSZ} the authors develop a idea to answer positively the previous question and establish a
correspondence between both filtrations, based on rewriting a function (which appears in \cite{Ke} and is
maximized by the special $1$-parameter subgroup) in a more geometrical way, to show that the Kempf filtration
satisfies certain convexity properties (c.f. section \ref{convexcones}), similar to the properties which characterize the Harder-Narasimhan
filtration. In this article, the author translates that idea to the case of
rank $2$ tensors over a smooth complex projective variety of arbitrary dimension.

We call a rank $2$ tensor the pair given by
$(E,\varphi:\overbrace{E\otimes\cdots \otimes E}^{\text{s times}}\rightarrow M)$, where 
$E$ is a coherent torsion free sheaf of rank $2$ over a smooth projective complex variety $X$, and $M$ is a
line bundle over $X$. Apart from being a
geometrical object by itself, in the case when $X$ is a smooth
projective complex curve and $\varphi$ is symmetric they define
degree $s$ coverings of $X$ lying on the ruled surface
$\mathbb{P}(E)$, by considering the vanishing locus of the
morphism (c.f. section \ref{coverings}). Hence, a notion of
stability for rank $2$ tensors defines a notion of stable or
unstable covering of an algebraic curve embedded into a ruled
surface.

Besides, the importance of this rank $2$ tensors case is that, for
other moduli problems previously considered in \cite{GSZ,Za1,Za2}
(e.g. sheaves, holomorphic pairs, Higgs sheaves, quiver
representations), there is already a notion of Harder-Narasimhan
filtration for these objects, constructed analogously to the case
of sheaves (c.f. \cite[Theorem 1.3.4]{HL2}). However, for rank $2$
tensors there is no notion a priori of Harder-Narasimhan
filtration because we do not know, in principle, how to define a
quotient tensor (c.f. Remark \ref{whynot}), which is needed to
construct the filtration recursively, after finding a maximally
destabilizing subobject.

Rank $2$ tensors are a particular case of tensors or decorated sheaves (c.f. \cite{GS} and \cite{GLSS}). After this work
were finished, it appeared a paper by A. Pustetto (c.f. \cite{Pu}) where it was proved the existence of a maximal destabilizing object
for tensors which are $\epsilon$-semistable and for those which are $k$-semistable of rank $\leq 3$. Both $\epsilon$ and $k$-stability notions
are much easier to check (they only need to checked on subsheaves instead of filtrations)
hence the calculation of the quantity (\ref{rightmu}) is more straightforward. Indeed, it is precisely the rank $2$ case where the stability
notion considered in this article coincides with $\epsilon$ and $k$-stability (c.f. \cite[Lemma 46]{Pu}), hence stability is checked by 
subobjects (c.f. Definition \ref{stabilitytensorsrk2}) and it is expected, as usual, the existence of a maximal destabilizing subobject.

%This is why, although the Kempf filtration
%coming from GIT is in principle a filtration of several terms, thanks to Lemma \ref{independenceofweightsrk2} we can use the techniques in 
%\cite{GSZ} to define a Harder-Narasimhan filtration of one single term, as it was expected from a stability definition which can be checked
%by subobjects

The main technical difficulty here is to prove that the Kempf filtration does not depend on the choice of certain
integer made during the construction of the moduli space, for
large values of the integer (c.f. Theorem
\ref{kempfstabilizesrk2}, proof in section \ref{kstabilizes}).
Finally, we define the Harder-Narasimhan filtration as the unique
filtration (after proving the independence of this integer) giving
maximal unstability from the GIT point of view (c.f. section
\ref{HNdefinition}).

When the variety the tensor is defined over is a smooth projective
complex curve, we can characterize the Harder-Narasimhan
filtration in terms of intersection theory for ruled surfaces
(c.f. section \ref{coverings}). As we pointed out before, an
unstable tensor will define an unstable covering of the curve, and
the Harder-Narasimhan filtration can be reinterpreted as a section
of a ruled surface whose intersection numbers maximize certain
quantity. The Harder-Narasimhan filtration can be a useful tool to
study the moduli space of such coverings.

In principle, the ideas in this article can be used in different
moduli problems to show that the filtration giving maximal
unstability from the GIT point of view and the Harder-Narasimhan
filtration do coincide in cases where the latter is previously
known, or to define a notion of Harder-Narasimhan filtration,
otherwise. For example, in \cite{Za1}, a similar correspondence is
proven for representations of a finite quiver in the category of
finite dimensional vector spaces over an algebraically closed
field of arbitrary characteristic. For tensors in general, a
notion of Harder-Narasimhan filtration is unknown, and rank $2$
tensors is a particular example of tensors, for which this method
can be implemented.

{\bf Acknowledgments.} The author wishes to thank Usha Bhosle, Tom\'as L. G\'omez
 and Peter Newstead for useful discussions, and to the Isaac Newton Institute for
 Mathematical Sciences in Cambridge, United Kingdom, where
part of this work was done, for hospitality. This work has been
supported by project MTM2010-17389 and ICMAT Severo Ochoa project
SEV-2011-0087 granted by Spanish Ministerio de Econom\'ia y
Competitividad. The author was also supported by a FPU grant from
the Spanish Ministerio de Educaci\'on.

\renewcommand\thesubsection{\arabic{subsection}}

\section{Stability for rank $2$ tensors}
Let $X$ be a smooth complex projective variety of dimension $n$.
Let $E$ be a coherent torsion free sheaf over $X$, of rank $2$. Let $M$ be a line bundle over $X$. We
call a \emph{rank $2$ tensor} the pair consisting of
$$
(E,\varphi:\overbrace{E\otimes\cdots \otimes E}^{\text{s times}}\rightarrow M)\; ,
$$
where the morphism $\varphi$ is not identically zero. These objects are particular cases of the ones in \cite[Definition 1.1]{GS} for arbitrary $s$, $c=1$,
$b=0$, $R=\Spec \mathbb{C}$ and $\mathcal{D}=M$.

A weighted filtration $(E_\bullet,n_{\bullet})$ of a sheaf $E$ is a filtration
\begin{equation}
\label{filtE} 0 \subset E_1 \subset E_2 \subset\cdots \subset E_t \subset E_{t+1}=E,
\end{equation}
and rational numbers $n_{1},\, n_{2},\ldots , \,n_{t} > 0$. We
denote $r_{i}=\rk(E_{i})$. Let $\gamma$ be a vector of
$\mathbb{C}^r$ defined as $\gamma=\sum_{i=1}^{t}n_{i} \gamma^{(\rk
E_i)}$ where
$$\gamma^{(k)}:=\big( \overbrace{k-r,\ldots,k-r}^k,
 \overbrace{k,\ldots,k}^{r-k} \big)
\qquad (1\leq k < r) \, .$$ Hence, the vector is of the form
$$\gamma=(\overbrace{\gamma_{r_{1}},\ldots,\gamma_{r_{1}}}^{\rk E^1},
\overbrace{\gamma_{r_{2}},\ldots,\gamma_{r_{2}}}^{\rk E^2}, \ldots,
\overbrace{\gamma_{r_{t+1}},\ldots,\gamma_{r_{t+1}}}^{\rk E^{t+1}}) \; ,$$ where
$n_{i}=\frac{\gamma_{r_{i+1}}-\gamma_{r_{i}}}{r}$.

Now let $\mathcal{I}=\{1,...,t+1\}^{\times s}$ be the set of all multi-indexes $I=(i_{i},...,i_{s})$  and define
\begin{equation}
\label{rightmu} \mu(\varphi,E_{\bullet},n_{\bullet})=\min_{I\in
\mathcal{I}} \{\gamma_{r_{i_1}}+\cdots+\gamma_{r_{i_s}}:
\,\varphi|_{E_{i_1}\otimes\cdots\otimes E_{i_s}}\neq
0 \}.
\end{equation}
We assume that $\varphi$ is not identically zero, then
(\ref{rightmu}) is well defined. Let $I_{0}$ be the multi-index
giving minimum in (\ref{rightmu}). We will denote by
$\epsilon_i(\varphi,E_\bullet,n_{\bullet})$ (or just
$\epsilon_i(E_{\bullet})$ if the rest of the data is clear from
the context) the number of elements $k$ of the multi-index $I_{0}$
such that $r_{k}\leq r_{i}$. Let us call
$\epsilon^i(E_{\bullet})=\epsilon_{i+1}(E_{\bullet})-\epsilon_{i}(E_{\bullet})$.
Using a calculation made in \cite{GS, Za2}, we can rewrite
(\ref{rightmu}) as
\begin{equation}
\label{rightmu2} \mu(\varphi,E_\bullet,n_{\bullet})=\sum_{i=1}^{t}n_{i}(sr_{i}-\epsilon_{i}(E_{\bullet})r)\; .
\end{equation}

Let $\delta$ be a polynomial of degree at most $\dim X-1=n-1$ and positive leading coefficient. If $P_{1}$ and
$P_{2}$ are two polynomials, we write $P_{1}\prec P_{2}$ if $P_{1}(m)<P_{2}(m)$ for $m\gg 0$, and analogously
for "$\leq$" and "$\preceq$".

\begin{dfn}\cite[Definition 1.3]{GS}
\label{stabilityfortensors}
We say that
$(E,\varphi,u)$ is \emph{$\delta$-semistable} if for all weighted
filtrations $(E_{\bullet},n_{\bullet})$ of $E$,
\begin{equation}
\label{stabtensors} \sum_{i=1}^{t}n_{i}\big((rP_{E_{i}}-r_{i}P_{E})\big) + \delta
(sr_{i}-\epsilon_{i}(E_{\bullet})r)\big)\preceq 0\; .
\end{equation}
We say that $(E,\varphi,u)$ is \emph{$\delta$-stable} if we have a
strict inequality in (\ref{stabtensors}) for every weighted
filtration. If $(E,\varphi,u)$ is not $\delta$-semistable we say
that it is \emph{$\delta$-unstable}.
\end{dfn}

It suffices to check the condition in Definition \ref{stabilityfortensors} over filtrations with $\rk E_{i}<\rk
E_{i+1}$. Hence, as the rank of $E$ is $2$, the only filtrations we have to check are one-step filtrations, i.e.
subsheaves of rank $1$, and we can rewrite the stability condition as follows:

\begin{dfn}
\label{stabilitytensorsrk2}
A rank $2$ tensor $(E,\varphi)$ is \emph{$\delta$-semistable} if for
every rank $1$ subsheaf $L\subset E$
\begin{equation}
\label{stabilityrk2}
 (2P_{L}-P_{E})+\delta(s-2\epsilon(L))\leq 0,
\end{equation}
where $\epsilon(L)$ is the number of times that $1$ appears in the
multi-index $(i_1,\ldots,i_s)$ giving the minimum in
(\ref{rightmu}) (notice that $L$ plays the role of $E_{1}$ in (\ref{filtE})) and $P_{E}$, $P_{L}$ are the Hilbert polynomials
of $E$ and $L$ respectively. If the inequality is strict for every $L$, we say that $(E,\varphi)$ is \emph{$\delta$-stable}.
If $(E,\varphi)$ is not $\delta$-semistable, we say that it is \emph{$\delta$-unstable}.
\end{dfn}

\section{Moduli space of rank $2$ tensors}

We recall the main points of the construction of the moduli space
for tensors with fixed determinant $\det(E)\cong \Delta$ of degree
$d$ and $\rk E=2$. The construction for tensors in general
appears in \cite{GS}, following Simpson's method, and it is also
included in \cite[Section 1.2]{Za2}, following Gieseker's method. Recall that
our case can be obtained by setting $c=1$, $b=0$, arbitrary $s$,
$R=\Spec \mathbb{C}$ and $\mathcal{D}=M$, line bundle over $X\times R\simeq X$. 

Let $V$ be a vector space of dimension $p:=h^0(E(m))$, where $m$
is a suitable large integer (in particular, $E(m)$ is generated by
global sections and $h^i(E(m))=0$ for $i>0$). Given an isomorphism
$V\cong H^0(E(m))$ we obtain a point
$$(\overline{Q},\overline{\Phi}) \in
\mathbb{P}(\Hom(\wedge^r V , A) )\times \mathbb{P}(\Hom(V^{\otimes s},B))\; .
$$
If we change the isomorphism $\det(E)\cong \Delta$, we obtain a
different point in the line defined by $Q$. Similarly, if we
change the isomorphism $V\cong H^0(E(m))$ by a homothecy, we
obtain a different point in the line defined by $Q$. In both
cases, the point $\overline{Q}$ in the projective space is the
same. The same applies for $\overline{\Phi}$. If we fix once and
for all a basis of $V$, then giving an isomorphism between $V$ and
$H^0(E(m))$ is equivalent to giving a basis of $H^0(E(m))$. A
change of basis is given by an element of $\GL(V)$, but, since an
homothecy does not change the point
$(\overline{Q},\overline{\Phi})$, when we want to get rid of this
choice it is enough to divide by the action of $\SL(V)$.

A weighted filtration $(V_\bullet,n_\bullet)$ of $V$ is a filtration
\begin{equation}
\label{filtVrk2} 0 \subset V_1 \subset V_2 \subset \;\cdots\; \subset V_t \subset V_{t+1}=V,
\end{equation}
and rational numbers $n_1,\, n_2,\ldots , \,n_t > 0$. Similarly to
weighted filtrations of $E$ (c.f. (\ref{filtE})), this is
equivalent to giving a $1$-parameter subgroup $\Gamma: \mathbb{C}^*\to
\SL(V)$ represented by the vector
$$
\Gamma=(\overbrace{\Gamma_1,\ldots,\Gamma_1}^{\dim V^1},
\overbrace{\Gamma_2,\ldots,\Gamma_2}^{\dim V^2},
\ldots,
\overbrace{\Gamma_{t+1},\ldots,\Gamma_{t+1}}^{\dim V^{t+1}}) \; ,
$$ up to conjugacy by an element of the parabolic subgroup defined by the filtration, where
$n_{i}=\frac{\Gamma_{i+1}-\Gamma_{i}}{\dim V}$ and define $V^{i}:=V_{i}/V_{i-1}$ (c.f. \cite{GSZ,Za2}).

By the Hilbert-Mumford criterion (c.f. \cite[Theorem 2.1]{GIT}), a point
$$(\overline{Q},\overline{\Phi}) \in
\mathbb{P}(\Hom(\wedge^r V , A) )\times \mathbb{P}(\Hom(V^{\otimes s},B))
$$
is \emph{GIT semistable} with respect to the natural linearization
on $\mathcal{O}(a_1,a_2)$ if and only if, for all weighted filtrations, it
is
$$
\mu(\overline{Q},V_\bullet,n_\bullet) + \frac{a_2}{a_1} \mu(\overline{\Phi},V_\bullet, n_\bullet) \leq 0\; .$$
The second summand of the expression is given by
\begin{equation}
\label{rightmuV} \mu(\Phi,V_{\bullet},n_{\bullet})=\min_{I\in \mathcal{I}} \{\Gamma_{\dim
V_{i_1}}+\cdots+\Gamma_{\dim V_{i_s}}: \,\Phi|_{V_{i_1}\otimes\cdots\otimes V_{i_s}}\neq 0 \}.
\end{equation}
If $I=(i_1,\ldots,i_s)$ is the multi-index giving minimum in (\ref{rightmuV}), we will analogously denote by
$\epsilon_i(\overline{\Phi},V_\bullet,n_\bullet)$ (or just $\epsilon_i(\overline{\Phi})$ if the rest of the data
is clear from the context) the number of elements $k$ of the multi-index $I$ such that $\dim V_k\leq \dim V_i$.
Let $\epsilon^i(\overline\Phi)=\epsilon_i(\overline\Phi)-\epsilon_{i-1}(\overline\Phi)$.

Given a weighted filtration of $V\simeq H^{0}(E(m))$ as in (\ref{filtVrk2}), we denote by 
$E_{V_i}$ the subsheaf of $E$ generated by $V_i$, and let $r_{i}=\rk E_{V_{i}}$ be its rank . Similarly, we denote by $E_{V^{i}}$ 
the sheaf generated by $V^{i}\simeq V_{i}/V_{i-1}$ and let $r^{i}=\rk E_{V^{i}}=r_{i}-r_{i-1}$ be its rank.

Using the calculation of the numerical function to apply Mumford criterion for GIT stability (c.f.
\cite[Proposition 1.2.29]{Za2}), we can state the following:
\begin{prop}
\label{GITstabrk2}
A point $(\overline{Q},\overline{\Phi})$ is \emph{GIT $a_2/a_1$-semistable} if for all
weighted filtrations $(V_\bullet,n_\bullet)$,
$$
\sum_{i=1}^{t}  n_i \big( r \dim V_i - r_i \dim V)
+\frac{a_2}{a_1} ( s\dim V_i-\epsilon_i(\overline\Phi)\dim V
)\big) \leq 0\; .
$$
\end{prop}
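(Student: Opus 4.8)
The plan is to recognise the two summands in the Hilbert--Mumford inequality (\cite[Theorem 2.1]{GIT}) recalled just before the statement, to compute each one explicitly as a function of the weighted filtration $(V_\bullet,n_\bullet)$, and then to add them and collect the common factor $n_i$. Write the $1$-parameter subgroup $\Gamma\colon\mathbb{C}^*\to\SL(V)$ attached to (\ref{filtVrk2}) with block weights $\Gamma_1\le\cdots\le\Gamma_{t+1}$, so that $n_i=(\Gamma_{i+1}-\Gamma_i)/\dim V$ and the $\SL$-condition reads $\sum_{i=1}^{t+1}\dim V^i\,\Gamma_i=0$. With this normalisation the criterion becomes $\mu(\overline Q,V_\bullet,n_\bullet)+\tfrac{a_2}{a_1}\,\mu(\overline\Phi,V_\bullet,n_\bullet)\le 0$, and the whole proof reduces to identifying these two numerical functions.

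For the first summand I would invoke the computation that is standard in the Gieseker (equivalently Simpson) construction of the moduli of sheaves and is recorded in \cite[Proposition 1.2.29]{Za2} and \cite{GS}. Since $\overline Q\in\mathbb{P}(\Hom(\wedge^r V,A))$ comes from the $r$-th wedge of the evaluation morphism, $\Gamma$ acts on the coordinate $v_{j_1}\wedge\cdots\wedge v_{j_r}$ with weight equal to the sum of the weights of the $v_{j_a}$, and $\mu(\overline Q,V_\bullet,n_\bullet)$ is the minimum of these weights taken over the nonzero coordinates of $\overline Q$. The essential input is that $V_i$ generates the subsheaf $E_{V_i}$ of rank $r_i$, which governs how many basis vectors may be drawn from the lower-weight blocks before the corresponding coordinate is forced to vanish; performing the minimisation yields $\mu(\overline Q,V_\bullet,n_\bullet)=\sum_{i=1}^{t}n_i\,(r\dim V_i-r_i\dim V)$.

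For the second summand, $\mu(\overline\Phi,V_\bullet,n_\bullet)$ is given directly by (\ref{rightmuV}) as the minimum, over multi-indices $I$ with $\Phi|_{V_{i_1}\otimes\cdots\otimes V_{i_s}}\neq0$, of $\Gamma_{\dim V_{i_1}}+\cdots+\Gamma_{\dim V_{i_s}}$. Fixing a minimising $I$ and grouping its entries by filtration step turns this weight into $\sum_{i}\epsilon^i(\overline\Phi)\,\Gamma_i$, where $\epsilon^i=\epsilon_i-\epsilon_{i-1}$ is the number of entries of $I$ lying in block $i$. A summation by parts then rewrites it as $\sum_{i=1}^{t}n_i\,(s\dim V_i-\epsilon_i(\overline\Phi)\dim V)$; here the $\SL$-normalisation enters decisively through the identity $\Gamma_{t+1}=\sum_{i=1}^{t}n_i\dim V_i$, which is exactly what makes the boundary term collapse. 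This is the verbatim analogue, at the level of $V$, of the passage from (\ref{rightmu}) to (\ref{rightmu2}) for $E$ performed in \cite{GS,Za2}. Adding the two expressions and collecting the factor $n_i$ produces the stated inequality.

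The step I expect to be the real obstacle is the first one: the explicit minimisation defining $\mu(\overline Q,V_\bullet,n_\bullet)$, where one must check that the coordinate of minimal weight is actually nonzero and pin it down in terms of the ranks $r_i$ of the generated subsheaves $E_{V_i}$. This is the technical heart of the Gieseker-type construction and is precisely what forces the integer $m$ to be taken large, so that $\dim V_i$ faithfully computes $h^0(E_{V_i}(m))$. By contrast the reindexing in the second step is essentially formal, provided one is careful that the single minimising multi-index of (\ref{rightmuV}) simultaneously computes the raw minimum and the counting numbers $\epsilon_i(\overline\Phi)$, so that the summation by parts is applied to a consistent choice.
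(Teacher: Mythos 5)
Your proposal is correct and takes essentially the same route as the paper, which states this proposition as the standard weight computation of the Gieseker--Simpson-type construction by deferring to \cite[Proposition 1.2.29]{Za2} and \cite{GS}: your reconstruction --- Hilbert--Mumford criterion, the wedge-weight minimisation giving $\sum_{i=1}^{t} n_i(r\dim V_i - r_i\dim V)$ via the generic ranks $r_i$ of the subsheaves $E_{V_i}$, and the Abel-summation rewriting of (\ref{rightmuV}) into $\sum_{i=1}^{t} n_i(s\dim V_i - \epsilon_i(\overline\Phi)\dim V)$ using $\Gamma_{t+1}=\sum_{i=1}^{t}n_i\dim V_i$ --- is precisely that calculation. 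One small correction to your closing remark: the largeness of $m$ (so that $\dim V_i$ computes $h^0(E_{V_i}(m))$) is what is needed for Theorem \ref{GIT-deltark2}, not for this proposition, whose minimisation only uses that $r_i$ is the rank of the subsheaf generated by $V_i$, i.e.\ the maximal number of generically independent sections one can draw from $V_i$.
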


The following theorem can be found in \cite[Theorem 4.5.3]{GLSS}. See also \cite[Theorem 3.6.]{GS} and \cite[Theorem 1.2.31]{Za2}
for the construction of a moduli space of tensors following Simpson (c.f. \cite{Si}) and Gieseker (c.f. \cite{Gi}) 
respectively, whose notations we follow in this article.

\begin{thm}
\label{GIT-deltark2} Let $(E,\varphi)$ be
a tensor. There exists an $m_{0}$ such that, for $m\geq m_{0}$ the
associated point $(\overline{Q},\overline{\Phi})$ is GIT
$a_2/a_1$-semistable if and only if the tensor is
$\delta$-semistable, where
$$
\frac{a_2}{a_1} = \frac{r\delta(m)}{P_E(m)-s\delta(m)}\; .
$$
\end{thm}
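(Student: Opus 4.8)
The plan is to establish the equivalence by transporting weighted filtrations back and forth between the sheaf $E$ and the vector space $V=H^0(E(m))$, and then matching the two numerical functions. Since $\rk E=2$ with fixed determinant, the sentence following Definition \ref{stabilityfortensors} lets me test $\delta$-semistability on one-step filtrations $0\subset L\subset E$ by a rank $1$ subsheaf (Definition \ref{stabilitytensorsrk2}), and, on the GIT side, on one-step filtrations $0\subset V_1\subset V$ (Proposition \ref{GITstabrk2}). First I set up the dictionary: to a saturated $L\subset E$ I associate $V_1=H^0(L(m))\subset V$, and to a subspace $V_1\subset V$ I associate the saturation $L$ of the subsheaf $E_{V_1}$ generated by $V_1$. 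The crucial preliminary input is boundedness: the saturated rank $1$ subsheaves that can appear form a bounded family, so there is a single $m_0$ such that for $m\ge m_0$ all the sheaves $E$ and $L$ in play are $m$-regular; in particular $h^i(E(m))=h^i(L(m))=0$ for $i>0$, the twists are globally generated, and $\dim V=P_E(m)$, $\dim H^0(L(m))=P_L(m)$. Global generation also forces, for $m\ge m_0$, the minimizing multi-index of (\ref{rightmuV}) for $\overline\Phi$ and that of (\ref{rightmu}) for $\varphi$ to coincide, so that $\epsilon_1(\overline\Phi)=\epsilon(L)$, because the non-vanishing $\Phi|_{V_1^{\otimes a}\otimes V^{\otimes(s-a)}}\neq 0$ is detected exactly by $\varphi|_{L^{\otimes a}\otimes E^{\otimes(s-a)}}\neq 0$.

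With this dictionary the argument reduces to a single numerical comparison. For a one-step filtration, with $r=2$, $r_1=1$, $\dim V=P_E(m)$ and (for $L$ saturated) $\dim V_1=P_L(m)$, the bracket in Proposition \ref{GITstabrk2} equals
\[
\bigl(2P_L(m)-P_E(m)\bigr)+\tfrac{a_2}{a_1}\bigl(sP_L(m)-\epsilon(L)P_E(m)\bigr).
\]
The point of the value $\tfrac{a_2}{a_1}=\tfrac{r\delta(m)}{P_E(m)-s\delta(m)}$ is that it forces the decoration term to agree, to top order in $m$, with the term $\delta(m)\bigl(s-2\epsilon(L)\bigr)$ of (\ref{stabilityrk2}); indeed a direct computation gives
\[
\tfrac{a_2}{a_1}\bigl(sP_L-\epsilon(L)P_E\bigr)-\delta\bigl(s-2\epsilon(L)\bigr)=\frac{s\,\delta\,\bigl(2P_L-P_E+(s-2\epsilon(L))\delta\bigr)}{P_E-s\delta},
\]
which grows at most like $m^{n-2}$, strictly below the order $m^{n-1}$ of the left-hand side of (\ref{stabilityrk2}). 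Hence the GIT bracket for the induced filtration and the left-hand side of (\ref{stabilityrk2}) differ by a quantity of strictly lower order in $m$, so they carry the same sign for $m\gg 0$.

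It then remains to run the two implications. For ``GIT $a_2/a_1$-semistable $\Rightarrow$ $\delta$-semistable'', I take any rank $1$ saturated $L\subset E$, form $V_1=H^0(L(m))$ so $\dim V_1=P_L(m)$, and read off the inequality of Proposition \ref{GITstabrk2} for this $V_1$; by the comparison above it agrees up to lower order with (\ref{stabilityrk2}), which therefore holds for $m\gg 0$, i.e.\ $\preceq 0$. For the converse, I take an arbitrary $V_1\subset V$, put $L$ equal to the saturation of $E_{V_1}$, and use $\dim V_1\le \dim H^0(E_{V_1}(m))\le P_L(m)$. Since the coefficient of $\dim V_1$ in the bracket, namely $r+\tfrac{a_2}{a_1}s=2+\tfrac{a_2}{a_1}s$, is strictly positive, replacing $\dim V_1$ by the larger $P_L(m)$ can only increase the bracket; the resulting quantity is the one treated above and is $\le 0$ for $m\gg 0$ by $\delta$-semistability, giving the GIT inequality for $V_1$.

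I expect the main obstacle to be uniformity and the boundary behaviour rather than the algebra. Producing a single $m_0$ valid simultaneously for every filtration requires the boundedness of the relevant families of subsheaves, which bounds the possible Hilbert polynomials and hence reduces the comparison to finitely many cases. Moreover the comparison must be read as an equality of leading coefficients, the discrepancy computed above being of lower order; for strictly semistable objects, where the order $m^{n-1}$ part of (\ref{stabilityrk2}) vanishes, one must descend to the next coefficient to conclude, and this is precisely where the subleading terms of $P_L$, $P_E$ and $\delta$, together with the finiteness coming from boundedness, have to be controlled. The gap $\dim V_1\le h^0(E_{V_1}(m))\le P_L(m)$ in the converse, handled here by the positivity of the coefficient of $\dim V_1$, is the other standard point that makes the large-$m$ hypothesis indispensable.
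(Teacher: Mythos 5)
Your proposal should first be measured against the right target: the paper does not prove Theorem \ref{GIT-deltark2} at all, but quotes it from \cite[Theorem 4.5.3]{GLSS} (see also \cite[Theorem 3.6]{GS} and \cite[Theorem 1.2.31]{Za2}), so the comparison is with the standard Gieseker--Simpson-style proof given in those references, whose skeleton (the dictionary between saturated subsheaves and subspaces of sections, plus a numerical matching) you have correctly reproduced. Your algebraic core is right, and in fact stronger than you claim: the identity you display shows that for a one-step filtration with $\dim V_1=P_L(m)$, $\dim V=P_E(m)$,
$$
(2P_L-P_E)+\tfrac{a_2}{a_1}\bigl(sP_L-\epsilon(L)P_E\bigr)
=\Bigl[(2P_L-P_E)+\delta\bigl(s-2\epsilon(L)\bigr)\Bigr]\cdot\frac{P_E}{P_E-s\delta}\;,
$$
an exact positive proportionality whenever $P_E(m)-s\delta(m)>0$. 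So the two brackets have literally the same sign at every large $m$, not merely the same sign ``to top order''; the worry in your last paragraph about strictly semistable objects and descent to subleading coefficients evaporates once you notice this.

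The genuine gap is the boundedness input, and it breaks your converse direction as written. The family of all saturated rank $1$ subsheaves of $E$ is not bounded (their degrees are unbounded below), and neither, uniformly in $m$, is the family of saturations $L$ of the section-generated subsheaves $E_{V_1}$: global generation of $E_{V_1}(m)$ only bounds $\deg E_{V_1}$ below by roughly $-gm$, a bound that degenerates as $m$ grows, so no single $m_0$ makes all these $L$ $m$-regular. Consequently your chain $\dim V_1\le h^0(E_{V_1}(m))\le P_L(m)$ is false in general: $h^0(L(m))\le P_L(m)$ needs vanishing of higher cohomology, and for $\deg L\ll 0$ at fixed $m$ one even has $P_L(m)<0\le h^0(E_{V_1}(m))$. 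The standard proof splits the test objects by a slope threshold: subsheaves with slope bounded below by a constant depending only on $E$, $s$, $\delta$ form a bounded family, where your argument works verbatim; for the rest one does not compare with $P_L$ at all but uses the Le Potier--Simpson estimate --- exactly the paper's Lemma \ref{LePSi}, which the paper records for its own slope-bounding argument in Proposition \ref{boundednessrk2} --- to bound $h^0(E_{V_1}(m))$ directly and show the GIT inequality of Proposition \ref{GITstabrk2} holds automatically for $m\gg0$, uniformly. The same split is needed (more mildly) in your forward direction, since $\dim V_1=P_L(m)$ requires $m$-regularity of $L$; subsheaves of very negative slope must first be discarded as trivially satisfying (\ref{stabilityrk2}). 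A second, lesser gap: Proposition \ref{GITstabrk2} demands the inequality for \emph{all} weighted filtrations of $V$, and the reduction to one-step filtrations on the GIT side is not formal, because the $\epsilon_i(\overline\Phi)$ of a multi-step filtration are computed from a single minimizing multi-index (the Hilbert--Mumford weight is a minimum of linear functions, hence concave, so nonpositivity on extreme rays does not propagate to the cone); in rank $2$ the reduction is nevertheless true, by the argument of Lemma \ref{independenceofweightsrk2} (all rank-$1$ filters share a saturation, so the multi-step brackets coincide with the one-step ones), but you need to say this rather than attribute it to Proposition \ref{GITstabrk2}.
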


\section{Kempf theorem}
\label{kempfsection}

Let $X$ be a smooth complex projective variety of dimension $n$
and let $\delta$ be a polynomial of degree at most $\dim X-1=n-1$
and positive leading coefficient. Let $(E,\varphi)$ be a
$\delta$-unstable rank $2$ tensor. Let $m_{0}$ be an integer as in
Theorem \ref{GIT-deltark2} (i.e. such that the $\delta$-stability
and the GIT stability coincide) and also such that $E$ is $m_{0}$
regular (choosing a larger integer, if necessary). Choose an
integer $m\geq m_{0}$ and let $V$ be a vector space of dimension
$P_{E}(m)=h^{0}(E(m))$.

By the Hilbert-Mumford criterion, stability of an orbit in the
parameter space where a group acts can be checked through
$1$-parameter subgroups, which turns out to be the checking of the
positivity of some quantity (c.f. Proposition \ref{GITstabrk2}).
The natural question which arises is whether there exists a best
way to destabilize a point in the sense of GIT, i.e. whether there
exists a best $1$-parameter subgroup which maximizes that
quantity. There are results in the literature (c.f. \cite{GIT},
\cite{He}, \cite{Ke}) studying the possibility of finding the best
$1$-parameter subgroup moving most rapidly toward the origin, i.e.
giving a notion of GIT maximal unstability. We will make use of
\cite{Ke} for our purposes.

Given a $1$-parameter subgroup, or equivalently a weighted
filtration, i.e. a filtration of vector subspaces $0\subset
V_{1}\subset \cdots \subset V_{t+1}= V$ and rational numbers
 $n_{1},\cdots,n_{t}>0$, we define the following function
$$\mu(V_{\bullet},n_{\bullet})=\frac{\sum_{i=1}^{t}  n_{i} \big(r\dim V_{i}-r_{i}\dim V+
\frac{a_{2}}{a_{1}}(s\dim V_{i}-\epsilon_{i}(\overline\Phi)\dim V
)\big)} {\sqrt{\sum_{i=1}^{t+1} {\dim V^{i}_{}} \Gamma_{i}^{2}}}\;
,$$ which we call a \emph{Kempf function} for this problem, i.e. a
function whose numerator coincides with the
numerical function in Proposition \ref{GITstabrk2} and the
denominator is a norm, a bilinear symmetric invariant form $||\Gamma ||$ in the space of
$1$-parameter subgroups (c.f. definition of length in \cite{Ke}). As the group $SL(V)$ is simple, every
bilinear symmetric invariant form is a multiple of the Killing
norm, hence it is unique up to scalar. Note that the norm is chosen in order to avoid the rescaling of the weights when
asking for the maximum of the function. In other words, the choice
of a norm calibrates the speed of the $1$-parameter subgroups.

The result of Kempf states that, given a GIT unstable point, i.e.
a point for which there exists any $1$-parameter subgroup making
the quantity in Proposition \ref{GITstabrk2} positive (the
numerator of the Kempf function), there exists a unique parabolic
subgroup containing a unique $1$-parameter subgroup in each
maximal torus, giving maximum for the Kempf function. In terms of
filtrations, there exists a unique weighted filtration giving
maximum for the Kempf function. Therefore, we rewrite
\cite[Theorem 2.2]{Ke} for this case:

\begin{thm}
\label{kempftheoremsheaves} There exists a unique weighted filtration
$$0\subset V_{1}\subset \cdots \subset V_{t+1}= V$$
and rational numbers $n_{1},\cdots,n_{t}>0$, up to multiplication by a scalar, called the \emph{Kempf filtration
of V}, such that the Kempf function $\mu(V_{\bullet},n_{\bullet})$ achieves the maximum among all filtrations
and positive weights $n_{i}>0$.
\end{thm}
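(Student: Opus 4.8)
The plan is to realize the statement as the specialization of Kempf's general instability theorem \cite[Theorem 2.2]{Ke} to the linearized action at hand, so the task is to verify that the present data fit Kempf's hypotheses and then to unwind his conclusion through the dictionary between weighted filtrations of $V$ and $1$-parameter subgroups of $\SL(V)$ recorded above. First I would fix the ambient linear action: $G=\SL(V)$ acts on $\mathbb{P}(\Hom(\wedge^{r}V,A))\times\mathbb{P}(\Hom(V^{\otimes s},B))$ with the linearization on $\mathcal{O}(a_{1},a_{2})$, the numerator of the Kempf function is precisely the Hilbert--Mumford numerical function computed in Proposition \ref{GITstabrk2}, and the denominator $\|\Gamma\|$ is the Killing norm, which is the unique $\SL(V)$-invariant length up to scalar because $\SL(V)$ is simple. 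Since $(E,\varphi)$ is $\delta$-unstable, Theorem \ref{GIT-deltark2} guarantees that $(\overline{Q},\overline{\Phi})$ is GIT unstable, i.e.\ the numerator is strictly positive for some weighted filtration; hence the supremum defining the Kempf function is positive and there is genuinely a maximum to attain.

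For existence I would work inside a fixed maximal torus $T$, identifying its $1$-parameter subgroups with the lattice points of $\{(\Gamma_{1},\dots,\Gamma_{p}):\sum_{i}\Gamma_{i}=0\}\subset\mathbb{R}^{p}$. The Kempf function is homogeneous of degree $0$ in $\Gamma$, so it suffices to maximize the numerator over the compact sphere $\|\Gamma\|=1$; continuity then yields a maximizer on $T$. The decisive structural input, following the rewriting of \cite{GSZ} (cf.\ section \ref{convexcones}), is that the numerator is \emph{concave} and homogeneous of degree one: the sheaf contribution $\sum_{i}n_{i}(r\dim V_{i}-r_{i}\dim V)$ is piecewise linear, while the tensor contribution $\mu(\overline{\Phi},V_{\bullet},n_{\bullet})=\min_{I}\{\Gamma_{\dim V_{i_{1}}}+\cdots+\Gamma_{\dim V_{i_{s}}}\}$ is a minimum of linear functions. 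Consequently every superlevel set $\{\Gamma:\ \text{numerator}\geq 1\}$ is convex, and maximizing the ratio is equivalent to minimizing the strictly convex Killing norm over this convex set; strict convexity forces the minimizer, hence the optimal direction inside $T$, to be unique up to positive scaling.

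The main obstacle is the passage from uniqueness inside a single torus to global uniqueness, since a priori optimal $1$-parameter subgroups drawn from different maximal tori could be unrelated. This is exactly where the full force of \cite[Theorem 2.2]{Ke} enters: Kempf produces a canonical parabolic subgroup $P$, independent of the chosen torus, such that all maximizers are conjugate under $P$ and therefore determine one and the same flag of subspaces, the weights attached to that flag being pinned down up to a common positive scalar. I would either invoke this statement directly or reproduce its argument, which compares maximizers across tori using the convexity just established together with the retraction structure of the spherical building of $\SL(V)$. Translating the resulting $P$-flag back through the correspondence between $1$-parameter subgroups of $\SL(V)$ and weighted filtrations of $V$ yields precisely the asserted unique weighted filtration $0\subset V_{1}\subset\cdots\subset V_{t+1}=V$ with weights $n_{i}>0$ determined up to a scalar, which is the Kempf filtration.
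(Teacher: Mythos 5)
Your proposal is correct and follows essentially the same route as the paper: the paper offers no independent proof of this theorem, but presents it exactly as you do, namely as the translation of Kempf's instability theorem \cite[Theorem 2.2]{Ke} into the language of weighted filtrations, using the dictionary between $1$-parameter subgroups of $\SL(V)$ and filtrations of $V$, the identification of the numerator with the Hilbert--Mumford function of Proposition \ref{GITstabrk2}, and the uniqueness (up to scalar) of the invariant norm on $1$-parameter subgroups since $\SL(V)$ is simple. Your additional sketch of the within-torus convexity argument is a faithful outline of what Kempf's proof (and the convexity machinery of \cite{GSZ} recalled in section \ref{convexcones}) supplies, so it does not constitute a different approach.
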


Let
\begin{equation}
\label{vkempf-filtrk2}
0\subset V_{1}\subset \cdots \subset V_{t+1}= V
\end{equation} be the \emph{Kempf filtration} of $V$
(c.f. Theorem \ref{kempftheoremsheaves}), and let
\begin{equation}
\label{ekempf-filtrk2}
0\subseteq (E^{m}_{1},\varphi|_{E_{1}^{m}})\subseteq
(E^{m}_{2},\varphi|_{E_{2}^{m}})\subseteq\cdots
(E^{m}_{t},\varphi|_{E_{t}^{m}})\subseteq
(E^{m}_{t+1},\varphi|_{E_{t+1}^{m}})=(E,\varphi)
\end{equation}
be the \emph{$m$-Kempf filtration} of the rank $2$ tensor $(E,\varphi)$, where $E_{i}^{m}\subset E$ is the
subsheaf generated by $V_{i}$ under the evaluation map. Note that the subsheaves do depend on the integer $m$ we
have chosen during the process of constructing the moduli space.

For a given $m$, the $m$-Kempf filtration represents the maximal
way of destabilizing a $\delta$-unstable tensor from the GIT point
of view. In this case, there is no notion, a priori, of a
Harder-Narasimhan filtration. Hence, the filtration we obtain from
GIT, once we prove that it does not depend on $m$, will define by
uniqueness a notion of Harder-Narasimhan filtration.

In the following we will prove this Theorem, in an analogous way as it was done in \cite{GSZ} for sheaves.
\begin{thm}
\label{kempfstabilizesrk2} There exists an integer $m'\gg 0$ such that the $m$-Kempf filtration of the $\rk 2$
tensor $(E,\varphi)$ does not depend on $m$, for $m\geq m'$.
\end{thm}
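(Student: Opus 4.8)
The plan is to adapt to rank $2$ tensors the strategy carried out for sheaves in \cite{GSZ}. Since $\rk E = 2$, the ranks $r_i = \rk E_i^m$ of the subsheaves in the $m$-Kempf filtration (\ref{ekempf-filtrk2}) take values in $\{0,1,2\}$, so at the level of $E$ the filtration jumps in rank at most twice and its essential content is a single rank $1$ subsheaf, the object on which stability is tested in Definition \ref{stabilitytensorsrk2}. I would begin by reducing to the case where each $E_i^m$ is saturated in $E$ (replacing the generated subsheaves by their saturations alters the Kempf function only by lower-order terms in $m$), so that the filtration of $E$ to be pinned down is of the form $0 \subset L \subset E$ with $L$ a saturated rank $1$ subsheaf, together with its weight.

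Next I would rewrite the Kempf function of Theorem \ref{kempftheoremsheaves} in Hilbert-polynomial terms. Substituting $a_2/a_1 = r\delta(m)/(P_E(m) - s\delta(m))$ from Theorem \ref{GIT-deltark2} into the numerator of Proposition \ref{GITstabrk2} and using the rewriting (\ref{rightmu2}), a short computation shows the numerator equals $\sum_i n_i\big((2P_{E_i^m}(m) - r_i P_E(m)) + \delta(m)(sr_i - \epsilon_i r)\big)$ up to the common positive factor $P_E(m)/(P_E(m)-s\delta(m))$, which is independent of the filtration; for a rank $1$ subsheaf $L$ this is exactly the $\delta$-destabilizing quantity $(2P_L - P_E) + \delta(s - 2\epsilon(L))$ of (\ref{stabilityrk2}). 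The passage from $\dim V_i$ to $P_{E_i^m}(m)$ is legitimate because for $m \gg 0$, and uniformly over a bounded family of subsheaves, $E_i^m$ is $m$-regular and $\dim V_i = h^0(E_i^m(m)) = P_{E_i^m}(m)$. A technical point here is that the combinatorial weight $\epsilon_i(\overline\Phi)$, defined via the ordering $\dim V_k \leq \dim V_i$, must be matched with $\epsilon_i(E_\bullet)$, defined via $r_k \leq r_i$; one checks these agree once the filtration is arranged by strictly increasing ranks.

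The third step is the asymptotic analysis. The denominator $\sqrt{\sum_i \dim V^i\,\Gamma_i^2}$ is likewise governed by the Hilbert polynomials $P_{E^i}(m)$ of the graded pieces, so after normalizing and optimizing the weights $n_i$ (a closed-form computation, since the denominator is a norm) the maximization of the Kempf function reduces, to leading order in $m$, to a convex optimization problem over the candidate subsheaves whose data is the collection of Hilbert polynomials $P_L$. By Grothendieck boundedness, the rank $1$ subsheaves of $E$ that can arise as $L = E_1^m$ range over a bounded family, so only finitely many polynomials $P_L$ occur; among these the convex problem of section \ref{convexcones} singles out a unique maximizer, which is $m$-independent because it is determined by the leading-order polynomial data alone.

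The main obstacle is the final stabilization: one must promote the convergence of the normalized Kempf function to the statement that its exact maximizer, unique for each $m$ by Theorem \ref{kempftheoremsheaves}, is literally constant for $m \geq m'$. This demands a uniform comparison between the Kempf function and its limit over the whole bounded family of candidates, so that the ordering of candidates by their Kempf value eventually matches the ordering by the limit functional; in particular the errors $h^0(L(m)) - P_L(m)$ must be bounded uniformly in $L$. Since distinct Hilbert polynomials are separated by $\prec$ for $m \gg 0$ and the family is finite, the strict unique maximizer of the limit problem is selected by the Kempf function for all large $m$, forcing the $m$-Kempf filtration of $E$ to stabilize. The delicate part throughout is controlling how the generated subsheaf $E_i^m$ itself varies with $m$ and showing it stabilizes, which is where the boundedness and uniform regularity estimates do the essential work.
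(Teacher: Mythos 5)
Your plan follows the same overall route as the paper (rewrite the Kempf function in Hilbert--polynomial terms, bound the candidate filters, get uniform regularity, reduce to finitely many numerical types, then stabilize), but two of its load-bearing steps are asserted rather than proved, and both are exactly where the paper's real work lies. The critical gap is your appeal to ``Grothendieck boundedness'' for the rank $1$ subsheaves arising as Kempf filters. The family of \emph{all} rank $1$ subsheaves of $E$ is unbounded: their degrees can tend to $-\infty$, and Grothendieck's lemma only bounds subsheaves whose slope is bounded below. A priori the filters $E_1^m$ could degenerate as $m$ grows, and ruling this out is precisely the content of Proposition \ref{boundednessrk2}, whose proof is not soft: it combines the Le Potier--Simpson estimate (Lemma \ref{LePSi}) with the convexity of the graph (Lemma \ref{lemmaA}, via $w_{t+1}=0$ and $\epsilon_{t+1}(\overline\Phi)=s$) and a leading-coefficient computation ($\xi_{2n}=0$, $\xi_{2n-1}<0$) that uses the tensor structure. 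Without this lower bound $\mu(E_i^m)\geq \frac{d}{r}-C$, your uniform regularity claim and the finiteness of the set of Hilbert polynomials have no foundation. Relatedly, your opening reduction (replacing generated subsheaves by their saturations, claiming only lower-order change in the Kempf function) is unjustified: the Kempf filtration is a filtration of $V$, its maximality is an exact property at each fixed $m$, not an asymptotic one, and the paper instead proves the needed statement $V_i=H^0(E_i^m(m))$ directly (Proposition \ref{taskrk2}), using the refinement property of Lemma \ref{lemmaB}.

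The second gap is at the end. Your separation argument (finitely many types, distinct polynomials eventually ordered by $\prec$) shows at best that the \emph{numerical invariants} of the maximizer stabilize; it does not show that the subsheaf itself is eventually constant, and your phrase ``strict unique maximizer of the limit problem'' assumes a uniqueness that is not available, since distinct subsheaves of $E$ can share the same Hilbert polynomial and the same $\epsilon$. The paper closes this in Proposition \ref{eventuallyrk2} by a cross-level use of Kempf's uniqueness: for $l_1\geq l_2\geq m_5$ the filtration $0\subset H^0(L^{l_2}(l_1))\subset H^0(E(l_1))$ attains the value $\Theta_{l_2}(l_1)=\Theta_{l_1}(l_1)=K(l_1)$ of the Kempf function, hence by the uniqueness in Theorem \ref{kempftheoremsheaves} it \emph{is} the $l_1$-Kempf filtration of $H^0(E(l_1))$; then $l_1$-regularity of both $L^{l_1}$ and $L^{l_2}$ (Proposition \ref{regularrk2}) identifies $L^{l_1}=L^{l_2}$ as subsheaves of $E$. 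Some argument of this kind, converting equality of Kempf values into equality of the actual filtrations, is indispensable and is missing from your proposal.
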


\section{Results on convexity}
\label{convexcones}

Now we recall the results from \cite[Section 2]{GSZ} about
convexity. We study a function on a convex set, and how to
maximize it. It will turn out to be that this function will be in
correspondence with the Kempf function and we will use these
results to figure out properties about the Kempf filtration.

Endow $\mathbb{R}^{t+1}$ with an inner product
$(\cdot,\cdot)$ defined by a diagonal matrix
 $$
 \left(
 \begin{array}{ccc}
 b^1 & & 0 \\
  & \ddots & \\
 0 & & b^{t+1}\\
 \end{array}
 \right)
 $$
where $b^i$ are positive integers. Let
$$
\mathcal{C}= \big\{ x\in \mathbb{R}^{t+1} : x_1<x_2<\cdots <x_{t+1}
\big\}\; ,
$$
and let $v= (v_1,\cdots,v_{t+1})\in \mathbb{R}^{t+1}-\{0\}$
verifying $\sum_{i=1}^{t+1} v_{i}b^{i}=0$. Define the function
\begin{eqnarray*}
  \label{eq:mu}
\mu_{v}:\overline{\mathcal{C}}-\{0\} & \to & \mathbb{R}\\
\Gamma & \mapsto & \mu_{v}(\Gamma)=\frac{(\Gamma,v)}{||\Gamma||}
\end{eqnarray*}
and note that  $\mu_{v}(\Gamma)=||v||\cdot \cos(\Gamma,v)$. Then, the function $\mu_{v}(\Gamma)$ does not depend
on the norm of $\Gamma$ and takes the same value on every point of the ray spanned by each $\Gamma$.

Assuming that there exists $\Gamma\in\overline{\mathcal{C}}$ verifying $\mu_v(\Gamma)>0$ we would like to find a
vector $\Gamma\in \overline{\mathcal{C}}$ maximizing $\mu_{v}$. We set $w^i=-b^iv_i$, $w_i=w^1+\cdots+w^i$, and
$b_i=b^1+\cdots+b^i$ and draw a graph joining the points with coordinates $(b_i,w_i)$, where the slope of each
segment is given by $-v_i$ (thin line in Figure \ref{convexfigure}). Now draw the convex envelope of this graph
(thick line in Figure \ref{convexfigure}), whose coordinates are denoted by $(b_i,\widetilde{w_i})$, and let us
define $\Gamma_{i}=-\frac{\widetilde{w_{i}}-\widetilde{w_{i-1}}}{b^i}$. Hence, $-\Gamma_i$ are the slopes of the
convex envelope graph of $v$, and call the vector defined in this way $\Gamma_{v}$.

\setlength{\unitlength}{1cm}

\begin{figure}[h]
   \begin{center}
\begin{picture}(13,7)(-1,-1)
\thicklines
% Lineas en los extremos del dibujo
% \put(-1,-1){\line(1,0){13}}
% \put(-1,-1){\line(0,1){7}}
% Ejes coordenados
\put(0,0){\line(1,0){11.5}}
\put(0,0){\line(0,1){5.5}}
% Etiquetas en ejes
\put(6,-0.3){\makebox(0,0)[c]{$b_i$}} \put(-0.4,3){\makebox(0,0)[c]{\rotatebox{90}{$w_i,\widetilde{w_i}$}}}
\put(0,0){\makebox(0,0){$\circ$}} \put(0,0){\line(1,1){4}} \put(2,2){\makebox(0,0){$\circ$}}
\put(3,3){\makebox(0,0){$\circ$}} \put(4,4){\makebox(0,0){$\circ$}} \put(4,4){\line(4,1){4}}
\put(5,4.25){\makebox(0,0){$\circ$}} \put(8,5){\makebox(0,0){$\circ$}} \put(8,5){\line(1,-1){2}}
\put(9.5,3.5){\makebox(0,0){$\circ$}} \put(10,3){\makebox(0,0){$\circ$}} \put(10,3){\line(1,-3){1}}
\put(11,0){\makebox(0,0){$\circ$}} \thinlines \put(0,0){\makebox(0,0){$\circ$}} \put(0,0){\line(2,1){2}}
\put(2,1){\makebox(0,0){$\circ$}} \put(2,1){\line(1,1){1}} \put(3,2){\makebox(0,0){$\circ$}}
\put(3,2){\line(1,2){1}} \put(4,4){\line(1,-1){1}} \put(5,3){\makebox(0,0){$\circ$}} \put(5,3){\line(3,2){3}}
\put(9,4){\makebox(0,0){$\circ$}} \put(9,4){\line(1,-4){0.5}} \put(9.5,2){\makebox(0,0){$\circ$}}
\put(9.5,2){\line(1,2){0.5}} \put(2,0.6){\makebox(0,0)[l]{$(b_1,w_1)$}}
\put(1.8,2.2){\makebox(0,0)[r]{$(b_1,\widetilde{w_1})$}} \put(3,1.6){\makebox(0,0)[l]{$(b_2,w_2)$}}
\put(2.8,3.2){\makebox(0,0)[r]{$(b_2,\widetilde{w_2})$}}
\put(3.8,4.2){\makebox(0,0)[r]{$(b_3,\widetilde{w_3}=w_3)$}}
\put(5,4.7){\makebox(0,0)[c]{$(b_4,\widetilde{w_4})$}} \put(5,2.6){\makebox(0,0)[c]{$(b_4,w_4)$}}
\put(8,5.4){\makebox(0,0)[c]{$(b_5,\widetilde{w_5}=w_5)$}}
\end{picture}
\end{center}
\caption{Convex envelope $\Gamma_{v}$ of $v$} \label{convexfigure}
\end{figure}
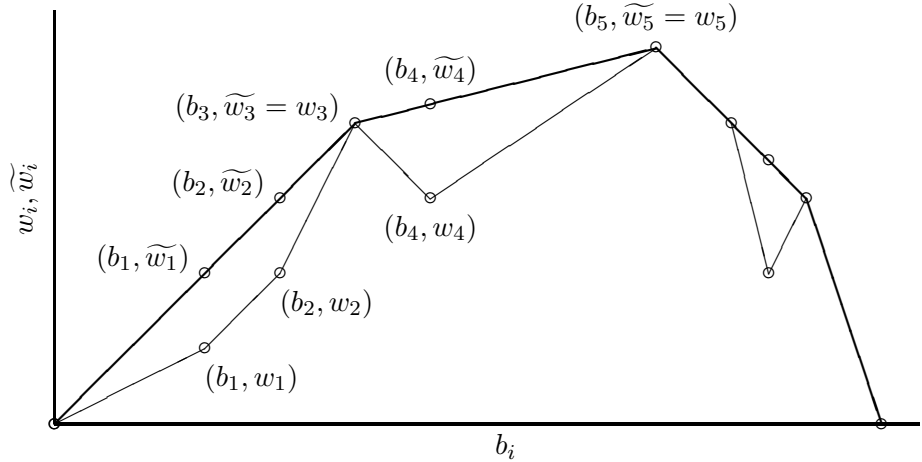

\begin{thm}
\label{maxconvexenvelope} The vector $\Gamma_{v}$ defined in this way (c.f. Figure \ref{convexfigure}) gives a
maximum for the function $\mu_v$ on its domain.
\end{thm}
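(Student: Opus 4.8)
The plan is to exhibit $\Gamma_v$ as the (suitably normalized) orthogonal projection of $v$ onto the closed convex cone $\overline{\mathcal{C}}$ and to deduce maximality directly from the Cauchy--Schwarz inequality, without appealing to any abstract compactness. The first step is to record the key algebraic identity obtained by summation by parts. Since $\sum_i b^i v_i=0$, setting $w_0=0$ and using $w_{t+1}=0$ one rewrites, for every $\Gamma\in\overline{\mathcal{C}}$,
\begin{equation*}
(\Gamma,v)=\sum_{i=1}^{t+1} b^i\Gamma_i v_i=-\sum_{i=1}^{t+1}\Gamma_i(w_i-w_{i-1})=\sum_{i=1}^{t}(\Gamma_{i+1}-\Gamma_i)\,w_i,
\end{equation*}
so that the numerator of $\mu_v$ is a nonnegative combination of the partial sums $w_i$, the coefficients $\Gamma_{i+1}-\Gamma_i\geq 0$ being exactly the increments allowed by membership in $\overline{\mathcal{C}}$.

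Next I would decompose $v=\Gamma_v+u$ with $u:=v-\Gamma_v$ and analyze $u$ through the same summation-by-parts device. Writing $W_i(x):=-\sum_{j\leq i}b^j x_j$, one checks that $W_i(v)=w_i$ while $W_i(\Gamma_v)=\widetilde{w_i}$, because the components of $\Gamma_v$ are minus the slopes of the convex envelope; hence $W_i(u)=w_i-\widetilde{w_i}=:U_i$. The defining properties of the envelope give $U_i\leq 0$ for all $i$ (the envelope lies above the graph) together with $U_0=U_{t+1}=0$, and the identity above applied to $u$ yields $(u,\Gamma)=\sum_{i=1}^t(\Gamma_{i+1}-\Gamma_i)U_i$ for every $\Gamma\in\overline{\mathcal{C}}$. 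Since $\Gamma_{i+1}-\Gamma_i\geq 0$ and $U_i\leq 0$, this immediately gives
\begin{equation*}
(u,\Gamma)=(v-\Gamma_v,\Gamma)\leq 0\qquad\text{for all }\Gamma\in\overline{\mathcal{C}}.
\end{equation*}

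The crucial step --- and the one I expect to be the main obstacle --- is the orthogonality relation $(u,\Gamma_v)=0$, a complementary-slackness phenomenon encoding the geometry of the envelope. Evaluating the formula above at $\Gamma=\Gamma_v$ gives $(u,\Gamma_v)=\sum_{i=1}^t(\Gamma_{i+1}-\Gamma_i)U_i$, and one must argue that each summand vanishes: at an index $i$ where the convex envelope touches the original graph one has $U_i=w_i-\widetilde{w_i}=0$, whereas at an index lying in the interior of a linear segment of the envelope the two adjacent slopes coincide, forcing $\Gamma_{i+1}-\Gamma_i=0$; in either case the product is zero. Making this dichotomy precise, including the bookkeeping at indices where segments of the envelope meet, is the heart of the argument.

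Finally I would assemble the pieces. From $(u,\Gamma_v)=0$ one obtains $\mu_v(\Gamma_v)=\frac{(\Gamma_v,\Gamma_v)+(u,\Gamma_v)}{||\Gamma_v||}=||\Gamma_v||$, where $\Gamma_v\neq 0$ because the standing hypothesis that $\mu_v>0$ somewhere rules out $\Gamma_v=0$. For an arbitrary $\Gamma\in\overline{\mathcal{C}}-\{0\}$, combining $(u,\Gamma)\leq 0$ with Cauchy--Schwarz gives
\begin{equation*}
(\Gamma,v)=(\Gamma,\Gamma_v)+(u,\Gamma)\leq(\Gamma,\Gamma_v)\leq ||\Gamma||\,||\Gamma_v||,
\end{equation*}
whence $\mu_v(\Gamma)\leq ||\Gamma_v||=\mu_v(\Gamma_v)$. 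This proves that $\Gamma_v$ attains the maximum of $\mu_v$ on its domain, as claimed.
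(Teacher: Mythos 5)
Your proof is correct, and it is essentially the argument underlying the result. Note first that the paper does not prove Theorem \ref{maxconvexenvelope} at all: it recalls it from \cite[Section 2]{GSZ}, and the proof there rests on exactly the ingredients you use, namely the Abel-summation identity $(\Gamma,v)=\sum_{i=1}^{t}(\Gamma_{i+1}-\Gamma_{i})w_{i}$ (valid since $w_{0}=w_{t+1}=0$), the majorization $w_{i}\le\widetilde{w_{i}}$ by the convex envelope, the complementary-slackness observation that $\widetilde{w_{i}}>w_{i}$ forces $\Gamma_{i+1}=\Gamma_{i}$ because the envelope is affine across such an index, and Cauchy--Schwarz. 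The only real difference is organizational: you package these facts as the decomposition $v=\Gamma_{v}+u$ with $(u,\Gamma)\le 0$ for all $\Gamma\in\overline{\mathcal{C}}$ and $(u,\Gamma_{v})=0$, i.e.\ you identify $\Gamma_{v}$, up to scale, as the orthogonal projection of $v$ onto the closed convex cone (a Moreau-type statement); this is a clean conceptual gloss on the same computation rather than a different route, since unwinding $(u,\Gamma)\le 0$ gives back precisely the chain $(\Gamma,v)=\sum_{i}(\Gamma_{i+1}-\Gamma_{i})w_{i}\le\sum_{i}(\Gamma_{i+1}-\Gamma_{i})\widetilde{w_{i}}=(\Gamma,\Gamma_{v})$. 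The two side points you worried about are fine: for the least concave majorant of finitely many points, every breakpoint of the envelope is a touch point of the graph, so your dichotomy is exhaustive; and $\Gamma_{v}\neq 0$ does follow from the standing hypothesis that $\mu_{v}$ is positive somewhere, since $\Gamma_{v}=0$ would mean $\widetilde{w_{i}}=0$ for all $i$ and hence $(\Gamma,v)\le 0$ for every $\Gamma$ in the cone.
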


\section{The $m$-Kempf filtration stabilizes with $m$}
\label{kstabilizes} In this section we will prove Theorem
\ref{kempfstabilizesrk2} through a series of partial results.
Given a $\delta$-unstable rank $2$ tensor $(E,\varphi)$ we have
the $m$-Kempf filtration of $(E,\varphi)$ (c.f.
(\ref{ekempf-filtrk2})). To this filtration we associate a graph,
in order to apply the previous results on convexity.

\begin{dfn}
\label{graphrk2} Let $m\geq m_{0}$. Given $0\subset V_{1}\subset
\cdots \subset V_{t+1}= V$, a filtration of vector spaces of $V$,
let
$$v_{m,i}=m^{n+1}\cdot \frac{1}{\dim V^{i}\dim V}\big[r^{i}\dim V-r\dim V^{i}+\dfrac{a_{2}}{a_{1}}(\epsilon^{i}(\overline\Phi)\dim V-s\dim V^{i})\big]\; ,$$
$$b_{m}^{i}=\dfrac{1}{m^{n}}\dim V^{i}>0\; ,$$
$$w_{m}^{i}=-b_{m}^{i}\cdot v_{m,i}=m\cdot \frac{1}{\dim V}\big[r\dim V^{i}-r^{i}\dim V+\dfrac{a_{2}}{a_{1}}(s\dim V^{i}-\epsilon^{i}(\overline\Phi)\dim V) \big]\; .$$
Also let
$$b_{m,i}=b_{m}^{1}+\ldots +b_{m}^{i}=\dfrac{1}{m^{n}}\dim V_{i}\; ,$$
$$w_{m,i}=w_{m}^{1}+\ldots +w_{m}^{i}=m\cdot \frac{1}{\dim V}\big[r\dim V_{i}-r_{i}\dim V+\dfrac{a_{2}}{a_{1}}(s\dim V_{i}-\epsilon_{i}(\overline\Phi)\dim V)\big]\; .$$
We call the graph defined by points $(b_{m,i},w_{m,i})$ the \emph{graph associated to the filtration}
$V_{\bullet}\subset V$.
\end{dfn}

Now we prove a crucial Lemma which will let us relate the Kempf
function with the function in Theorem \ref{maxconvexenvelope}, in
order to prove Theorem \ref{kempfstabilizesrk2}. The lemma
strongly uses the assumption on the rank $2$ of the tensor, the
reason why the result cannot be analogously extended in more
generality. A discussion about the issues when applying the method
for rank $3$ can be read in \cite[Section 2.5]{Za2}.

\begin{lem}
\label{independenceofweightsrk2} The symbols
$\epsilon_i(\overline\Phi)=\epsilon_i(\overline\Phi,V_{\bullet},n_{\bullet})$ do not depend on the weights
$n_{\bullet}$. Therefore, the graph associated to the filtration only depends on the data $V_{\bullet}\subset
V$, not the weights $n_{\bullet}$.
\end{lem}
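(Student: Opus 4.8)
The plan is to determine the minimizing multi-index in (\ref{rightmuV}) explicitly and to show that, although this multi-index is defined through the weights (via the strictly increasing sequence $\Gamma_1<\cdots<\Gamma_{t+1}$, where $\Gamma_{i+1}-\Gamma_i=n_i\dim V$), the resulting numbers $\epsilon_i(\overline\Phi)$ depend only on the filtration $V_\bullet$. Writing $\sum_l\Gamma_{i_l}=s\Gamma_1+\dim V\sum_l n_l(s-\epsilon_l(I))$, where $\epsilon_l(I)$ counts the entries of $I$ not exceeding $l$, minimizing (\ref{rightmuV}) is the same as maximizing $\sum_l n_l\,\epsilon_l(I)$ over the multi-indices $I$ with $\overline\Phi|_{V_{i_1}\otimes\cdots\otimes V_{i_s}}\neq 0$. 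Since the $n_i$ are arbitrary positive numbers, it suffices to exhibit a single admissible profile that simultaneously maximizes every $\epsilon_l$; such a dominant profile then realizes the minimum for all weights at once, and its $\epsilon_i$ are forced.

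The existence of such a dominant profile is exactly where the hypothesis $\rk E=2$ enters. First I would record that the generated subsheaves satisfy $0<r_1\le\cdots\le r_{t+1}=2$ with each $r_i\in\{1,2\}$, since a nonzero subsheaf of the torsion free sheaf $E$ has rank $1$ or $2$; let $p$ be the largest index with $r_p=1$ (with $p=0$ if $r_1=2$). The key observation is that the nested rank $1$ subsheaves $E_{V_1}\subseteq\cdots\subseteq E_{V_p}$ all have the same generic fibre, a fixed line $\ell\subset E_\eta\cong\mathbb{C}^2$ at the generic point $\eta$, whereas each $E_{V_i}$ with $i>p$ has generic fibre all of $E_\eta$. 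Because $M$ is a line bundle, hence torsion free, any restriction $\varphi|_{E_{V_{i_1}}\otimes\cdots\otimes E_{V_{i_s}}}$ is nonzero if and only if it is nonzero at $\eta$, and for $m\ge m_0$ this matches the condition $\overline\Phi|_{V_{i_1}\otimes\cdots\otimes V_{i_s}}\neq 0$. Consequently the nonvanishing depends only on the pattern $P=\{\,l:i_l\le p\,\}$ of slots landing in a rank $1$ piece (each contributing $\ell$) versus a rank $2$ piece (each contributing $E_\eta$), and these admissible patterns form a down-set under inclusion.

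With this in hand the minimizer is forced. Since $\Gamma_1<\cdots<\Gamma_{t+1}$ strictly, a slot whose generic fibre is $\ell$ is cheapest at level $1$ and a slot whose generic fibre is $E_\eta$ is cheapest at level $p+1$ (the smallest index of rank $2$); every other level is strictly more expensive and contributes the same generic fibre, so it is never used. The only remaining freedom is the choice of the pattern $P$, and to lower the cost one maximizes $|P|$ among admissible patterns. All maximal admissible patterns have the same cardinality $|P_{\max}|$, so they yield the same profile: $\epsilon_i(\overline\Phi)=|P_{\max}|$ for $1\le i\le p$ and $\epsilon_i(\overline\Phi)=s$ for $p+1\le i\le t+1$ (and $\epsilon_i=s$ for all $i$ when $p=0$). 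Both $p$ and $|P_{\max}|$ are combinatorial invariants of $V_\bullet$, so $\epsilon_i(\overline\Phi)$, and hence every ingredient of Definition \ref{graphrk2}, is independent of $n_\bullet$.

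I expect the main obstacle to be the careful justification of the reduction to the generic fibre: namely that $\overline\Phi|_{V_{i_1}\otimes\cdots\otimes V_{i_s}}\neq 0$ is equivalent to the nonvanishing of $\varphi$ on the generated subsheaves $E_{V_{i_1}}\otimes\cdots\otimes E_{V_{i_s}}$ (valid for $m\ge m_0$), and that the latter is detected at the generic point because $M$ is torsion free. This step is also precisely where the rank hypothesis is indispensable: for $\rk E\ge 3$ the rank $1$ pieces no longer share a common generic fibre and the generic nonvanishing genuinely depends on the individual levels, so no dominant profile need exist and the minimizer can vary with the weights (c.f. \cite[Section 2.5]{Za2}).
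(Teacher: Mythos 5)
Your proof is correct and takes essentially the same route as the paper's: the rank $2$ hypothesis forces all the nested rank $1$ filters to share a common generic line, so nonvanishing (detected at the generic point since $M$ is torsion free) depends only on which slots carry rank $1$ versus rank $2$ filters, and the strictly increasing $\Gamma_i$ then force any minimizer of (\ref{rightmuV}) to use only the lowest level of each rank, making the $\epsilon_i$ depend only on $V_\bullet$. Your write-up is in fact more careful than the paper's own proof, which leaves the dominant-profile argument and the generic-point reduction implicit, but the underlying mechanism is identical.
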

\begin{proof}
Note that $\rk E_{1}\geq 1$ because it is generated by, at least, a
non zero global section. Suppose that $\rk E^{m}_{1}=\rk
E^{m}_{2}=\ldots=\rk E^{m}_{k}=1$ and $\rk E^{m}_{k+1}=\ldots=\rk
E^{m}_{t}=\rk E=2$. Then, for example, $E^{m}_{1}$ coincide with
$E^{m}_{2}$ on an open set and, generically, the behavior with
respect to $\varphi$ is the same, i.e.
$$\overline\Phi|_{V_{1}\otimes\cdots\otimes
V_{1}}=0\Leftrightarrow \varphi|_{E^{m}_{1}\otimes\cdots\otimes
E^{m}_{1}}=0\Leftrightarrow \varphi|_{E^{m}_{2}\otimes
E^{m}_{1}\cdots \otimes E^{m}_{1}}=0\; .$$ Therefore, the values
$\epsilon_i(\overline\Phi,V_{\bullet},n_{\bullet})$ only depend on
the filters $E^{m}_{i}$ but not on the specific values of the
$\Gamma_{i}$. In fact, they will only depend on $\Gamma_{1}$ and
$\Gamma_{k+1}$, because they are the minimal ones among the
filters of the same rank (c.f. (\ref{rightmu}) and
(\ref{rightmuV})). In this case we will just write
$\epsilon_i(\overline\Phi,V_{\bullet})$, or
$\epsilon_i(\overline\Phi)$, when the filtration is clear from the
context.
\end{proof}

Next, we shall identify the Kempf function in Theorem \ref{kempftheoremsheaves}
$$\mu(V_{\bullet},n_{\bullet})=\frac{\sum_{i=1}^{t}  n_{i} \big(r\dim V_{i}-r_{i}\dim V+\frac{a_{2}}{a_{1}}
(s\dim V_{i}-\epsilon_{i}(\overline\Phi)\dim V )\big)}{\sqrt{\sum_{i=1}^{t+1} {\dim V^{i}} \Gamma_{i}^{2}}}$$
$$=\frac {\sum_{i=1}^{t+1} \frac{\Gamma_i}{\dim V} \big( r^i \dim V - r\dim V^i+\frac{a_2}{a_1}
 (\epsilon^i(\overline\Phi)\dim V -s\dim V^i)\big)}
 {\sqrt{\sum_{i=1}^{t+1} {\dim V^{i}} \Gamma_{i}^{2}}}\; ,$$
where $n_{i}=\frac{\Gamma_{i+1}-\Gamma_{i}}{\dim V}$, with the function in Theorem \ref{maxconvexenvelope}.
Precisely, we use Lemma \ref{independenceofweightsrk2} to assure that the data of the filters
$V_{\bullet}\subset V$, and the data of the weights $n_{\bullet}$ are independent, so we can maximize the Kempf
function with respect to each of them, independently, as in Theorem \ref{maxconvexenvelope}.

\begin{prop}
\label{identificationrk2} For every integer $m$, the following equality holds
$$\mu(V_{\bullet},n_{\bullet})=m^{(-\frac{n}{2}-1)}\cdot \mu_{v_{m}}(\Gamma)$$
between the Kempf function on Theorem \ref{kempftheoremsheaves} and the function in Theorem
\ref{maxconvexenvelope}.
\end{prop}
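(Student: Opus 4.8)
The plan is to prove the identity by a direct computation, matching the ingredients of the convexity setup of Theorem \ref{maxconvexenvelope} with the two pieces of the Kempf function. Concretely, I would take the inner product on $\mathbb{R}^{t+1}$ to be the diagonal form with weights $b^i=b_m^i=\frac{1}{m^n}\dim V^i$, and take the fixed direction vector to be $v=v_m=(v_{m,1},\dots,v_{m,t+1})$ with the $v_{m,i}$ of Definition \ref{graphrk2}. The vector of the $1$-parameter subgroup is $\Gamma=(\Gamma_1,\dots,\Gamma_{t+1})$, with $n_i=\frac{\Gamma_{i+1}-\Gamma_i}{\dim V}$. Before anything else I would verify the standing hypothesis $\sum_{i=1}^{t+1}v_{m,i}b_m^i=0$ of the convexity framework: since $b_m^i v_{m,i}=\frac{m}{\dim V}\big[r^i\dim V-r\dim V^i+\frac{a_2}{a_1}(\epsilon^i(\overline\Phi)\dim V-s\dim V^i)\big]$, the required vanishing follows from the summation identities $\sum_i r^i=r$, $\sum_i\dim V^i=\dim V$, and $\sum_i\epsilon^i(\overline\Phi)=\epsilon_{t+1}(\overline\Phi)-\epsilon_0(\overline\Phi)=s$, so that each of the four terms telescopes to cancel in pairs.

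Next I would compute the two constituents of $\mu_{v_m}(\Gamma)=\frac{(\Gamma,v_m)}{||\Gamma||}$ separately. For the pairing, using $b_m^i v_{m,i}=m\cdot\frac{1}{\dim V}[\,\cdots\,]$ from the previous step gives
$$(\Gamma,v_m)=\sum_{i=1}^{t+1}b_m^i\,\Gamma_i\,v_{m,i}=m\sum_{i=1}^{t+1}\frac{\Gamma_i}{\dim V}\big(r^i\dim V-r\dim V^i+\tfrac{a_2}{a_1}(\epsilon^i(\overline\Phi)\dim V-s\dim V^i)\big),$$
which is exactly $m$ times the numerator of the Kempf function in its rewritten form displayed before the proposition. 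For the norm, $||\Gamma||^2=\sum_{i=1}^{t+1}b_m^i\,\Gamma_i^2=\frac{1}{m^n}\sum_{i=1}^{t+1}\dim V^i\,\Gamma_i^2$, so that $||\Gamma||=m^{-n/2}\sqrt{\sum_{i=1}^{t+1}\dim V^i\,\Gamma_i^2}$, i.e. $m^{-n/2}$ times the denominator of the Kempf function. Taking the quotient, the powers of $m$ combine as $m^{1}\cdot m^{n/2}=m^{\frac{n}{2}+1}$, giving $\mu_{v_m}(\Gamma)=m^{\frac{n}{2}+1}\,\mu(V_\bullet,n_\bullet)$, which is the asserted identity after dividing across.

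The computation itself is routine, so the one point genuinely requiring care — and what I would stress in the write-up — is that $v_m$ must be a bona fide \emph{fixed} vector for $\mu_{v_m}$ to be the function of $\Gamma$ alone that appears in Theorem \ref{maxconvexenvelope}. This is precisely where I invoke Lemma \ref{independenceofweightsrk2}: a priori the quantities $\epsilon^i(\overline\Phi)$ entering $v_{m,i}$ are defined through the multi-index minimizing (\ref{rightmuV}) and could depend on the weights $n_\bullet$ (equivalently on $\Gamma$), but the rank $2$ hypothesis forces them to depend only on the filtration $V_\bullet\subset V$. Hence, for a fixed filtration, $v_m$ is constant as $\Gamma$ varies, and the identity above holds with $v_m$ playing the role of the fixed $v$ of the convexity setup. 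This is the structural hinge that lets one subsequently maximize the Kempf function over the weights (for a fixed filtration) by the convex-envelope procedure of Theorem \ref{maxconvexenvelope}.
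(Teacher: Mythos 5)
Your proposal is correct and takes essentially the same approach as the paper: the paper's entire proof consists of invoking Lemma \ref{independenceofweightsrk2} so that $v_{m}$ is a genuinely fixed vector in the convexity setup, leaving the algebra implicit, and your computation (the telescoping check $\sum_{i} b_{m}^{i}v_{m,i}=0$, the pairing $(\Gamma,v_{m})$ giving $m$ times the numerator, and $||\Gamma||$ giving $m^{-n/2}$ times the denominator) is exactly the identification of Definition \ref{graphrk2} with the Kempf function that the paper intends. The only difference is that you write out the routine verification the paper omits, which is a faithful completion rather than a different route.
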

\begin{pr}
By Lemma \ref{independenceofweightsrk2}, we can fix a vector $v_{m}$ and look for the maximum of the function
$\mu_{v_{m}}$ among the corresponding convex cone.
\end{pr}

In the following, we will omit the subindex $m$ for the numbers $v_{m,i}$, $b_{m,i}$, $w_{m,i}$ in the
definition of the graph associated to the filtration of vector spaces, where it is clear from the context.

Now, we recall (c.f. \cite{GSZ,Za2}) two lemmas encoding the convexity properties of the graph associated to the
Kempf filtration. They will be used in the following, to show properties shared by the possible filters
$E_{i}^{m}$ appearing in the different $m$-Kempf filtrations.

\begin{lem}\cite[Lemma 3.4]{GSZ} or \cite[Lemma 2.1.15]{Za2}
\label{lemmaA}
%To the maximal filtration corresponds a maximal graph
Let $0\subset V_{1}\subset \cdots \subset V_{t+1}= V$ be the Kempf filtration of $V$ (c.f.. Theorem
\ref{kempftheoremsheaves}). Let $v=(v_{1},...,v_{t+1})$ be the vector of the graph associated to this filtration
by Definition \ref{graphrk2}. Then
$$
v_{1}<v_{2}<\ldots<v_{t}<v_{t+1}\; ,
$$
i.e., \emph{the graph is convex}.
\end{lem}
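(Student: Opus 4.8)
The plan is to derive the convexity of the graph directly from the extremal characterization of the Kempf filtration, using the dictionary established in Proposition \ref{identificationrk2} together with Theorem \ref{maxconvexenvelope}. By Proposition \ref{identificationrk2}, maximizing the Kempf function $\mu(V_\bullet,n_\bullet)$ is, up to the positive constant $m^{-n/2-1}$, the same as maximizing the function $\mu_{v}(\Gamma)=(\Gamma,v)/\|\Gamma\|$ of Section \ref{convexcones}, where $v=v_m$ is the vector attached to the filtration in Definition \ref{graphrk2}. The crucial input is Lemma \ref{independenceofweightsrk2}: for a fixed chain $0\subset V_1\subset\cdots\subset V_{t+1}=V$ the vector $v$ does not depend on the weights $n_\bullet$ (equivalently on $\Gamma$). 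Hence, with the filtration fixed, one may optimize over the weights alone, and Theorem \ref{maxconvexenvelope} identifies the unique optimal $\Gamma$ with the convex envelope $\Gamma_v$ of the points $(b_i,w_i)$.

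First I would record the exact relation between vanishing weights and the geometry of the envelope. Writing the envelope coordinates $(b_i,\widetilde{w_i})$, the slope of its $i$-th segment is $(\widetilde{w_i}-\widetilde{w_{i-1}})/b^i=-\Gamma_i$, so that $n_i=(\Gamma_{i+1}-\Gamma_i)/\dim V>0$ holds if and only if the envelope slope strictly decreases at $b_i$, i.e. if and only if $(b_i,w_i)$ is a genuine vertex (corner) of the upper convex hull of the points. Equivalently, $n_i=0$ precisely when $(b_i,w_i)$ lies on, or strictly below, the chord through its neighbouring vertices, in which case the envelope $\Gamma_v$ simply ignores the filter $V_i$.

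Now I would run the argument by contradiction. The Kempf filtration of Theorem \ref{kempftheoremsheaves} is given in reduced form: all inclusions are strict and all weights satisfy $n_i>0$. Since it attains the global maximum of $\mu$, in particular its weights attain the maximum of $\mu_v$ over $\Gamma\in\overline{\mathcal{C}}$ for its own fixed vector $v$; by the uniqueness of the envelope maximizer in Theorem \ref{maxconvexenvelope} these weights must be those of $\Gamma_v$. Suppose the graph were not strictly convex, say $v_i\geq v_{i+1}$ for some $i$. Then the segment slopes satisfy $-v_i\leq -v_{i+1}$ and do not strictly decrease at $b_i$, so $(b_i,w_i)$ is not a vertex of the convex hull, and by the previous paragraph the envelope forces $n_i=0$. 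This contradicts $n_i>0$. Therefore $v_1<v_2<\cdots<v_{t+1}$, which is exactly the assertion that the graph coincides with its convex envelope, i.e. that it is convex.

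The step I expect to be the main obstacle is the legitimacy of decoupling the two optimizations, over the chain $V_\bullet$ and over the weights $\Gamma$, which is what allows the envelope of a single fixed $v$ to govern the weights. This rests entirely on Lemma \ref{independenceofweightsrk2}, and hence on the rank $2$ hypothesis: were the symbols $\epsilon_i(\overline\Phi)$ to depend on the weights, perturbing $\Gamma$ would move $v$ and the clean envelope description of the maximizer would fail. A secondary point requiring care is the reduced form of the Kempf filtration: one must ensure that ``all $n_i>0$'' is part of the data, so that a non-convex $v$, whose envelope kills some $V_i$, is genuinely excluded rather than merely re-expressing the same $1$-parameter subgroup by a shorter chain, and one must invoke the uniqueness of the envelope maximizer to pin the Kempf weights to those of $\Gamma_v$.
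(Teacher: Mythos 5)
Your proof is correct and is essentially the argument of the paper's cited source: the paper does not reprove Lemma \ref{lemmaA} but defers to \cite{GSZ} and \cite{Za2}, where convexity is obtained exactly as you do, by decoupling the chain from the weights via Lemma \ref{independenceofweightsrk2}, identifying the Kempf function with $\mu_{v}$ (Proposition \ref{identificationrk2}), and noting that a non-convex corner $v_i\geq v_{i+1}$ would force the envelope maximizer to assign $n_i=0$, contradicting the strict positivity of the Kempf weights. The only caveat is that Theorem \ref{maxconvexenvelope} as stated asserts existence, not uniqueness, of the maximizer, so the uniqueness you use to pin the Kempf weights to $\Gamma_{v}$ should instead be drawn from Theorem \ref{kempftheoremsheaves} (uniqueness of the maximizing weighted filtration up to scalar) or deduced from convexity of the cone $\overline{\mathcal{C}}$; this is how the cited argument closes, and it does not affect the substance of your proof.
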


\begin{lem}\cite[Lemma 3.5]{GSZ} or \cite[Lemma 2.1.16]{Za2}
\label{lemmaB} Let $0\subset V_{1}\subset \cdots \subset V_{t+1}=
V$ be the Kempf filtration of $V$ (c.f.. Theorem
\ref{kempftheoremsheaves}). Let $W$ be a vector space with
$V_{i}\subset W\subset V_{i+1}$ and consider the new filtration
$V'_{\bullet}\subset V$
\begin{equation}
    \begin{array}{ccccccccccccccccc}
    0 & \subset & V'_{1} & \subset & \cdots & \subset & V'_{i} & \subset & V'_{i+1} & \subset &
    V'_{i+2} & \subset & \cdots & \subset & V'_{t+2} & = & V\\
    || & & || & & & & || & & || & & || & & & & & & ||  \\
    0 & \subset & V_{1}& & & \subset & V_{i} & \subset & W & \subset & V_{i+1} & \subset &
    \cdots & \subset & V_{t+1} & = & V
    \end{array}
\end{equation}
Then, $v'_{i+1}\geq v_{i+1}$. We say that \emph{the Kempf filtration is the convex envelope of every
refinement}.
\end{lem}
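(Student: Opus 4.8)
The plan is to deduce the inequality from the maximality of the Kempf filtration (Theorem \ref{kempftheoremsheaves}) together with the convex-envelope characterization of the maximizer (Theorem \ref{maxconvexenvelope}) and the identification in Proposition \ref{identificationrk2}. First I would translate the statement into the language of the graph of Definition \ref{graphrk2}. Inserting $W$ between $V_i$ and $V_{i+1}$ produces a graph for $V'_\bullet$ which I claim agrees with the graph of $V_\bullet$ at every cumulative point indexed by an original $V_j$, and which carries exactly one extra point $P=(b_W,w_W)$ with $b_W=m^{-n}\dim W$ lying horizontally strictly between the points attached to $V_i$ and $V_{i+1}$. The only delicate ingredient in this identification is that the symbols $\epsilon_j(\overline\Phi)$ at the shared indices are not disturbed by the refinement; this is exactly where the $\rk 2$ hypothesis enters, through Lemma \ref{independenceofweightsrk2}, since there $\epsilon$ depends only on the two rank-transition weights and not on the finer filtration data.

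With the graphs so aligned, I would record the elementary computation that $v'_{i+1}\geq v_{i+1}$ holds if and only if the inserted point $P$ lies on or below the segment of the graph of $V_\bullet$ joining the points attached to $V_i$ and $V_{i+1}$. Indeed, that segment has slope $-v_{i+1}$, while the left half of the refined graph, joining the point of $V_i$ to $P$, has slope $-v'_{i+1}$; since both share the same left endpoint and $P$ has $b$-coordinate strictly in between, $P$ lies on or below the original segment precisely when $-v'_{i+1}\leq -v_{i+1}$, that is, $v'_{i+1}\geq v_{i+1}$.

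It then remains to show that $P$ cannot lie strictly above that segment, and here I would argue by contradiction using maximality. By Lemma \ref{lemmaA} the graph of the Kempf filtration $V_\bullet$ is convex, hence coincides with its own convex envelope, and the associated value of the Kempf function is the global maximum $\mu(V_\bullet,n_\bullet)$. If $P$ were strictly above the segment, then the convex envelope of the refined graph would strictly dominate the Kempf graph near $P$; since, through Proposition \ref{identificationrk2} and Theorem \ref{maxconvexenvelope}, the maximal value of the Kempf function attached to a filtration is a monotone function of the heights of this envelope, the optimal weights for $V'_\bullet$ would make the Kempf function strictly larger than its value for $V_\bullet$. This contradicts Theorem \ref{kempftheoremsheaves}, which asserts that $V_\bullet$ maximizes the Kempf function over \emph{all} weighted filtrations. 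Hence $P$ lies on or below the segment and $v'_{i+1}\geq v_{i+1}$.

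I expect the principal obstacle to be the first paragraph: rigorously checking that refinement leaves the cumulative coordinates at the shared indices unchanged, i.e. that the minimizing multi-index in (\ref{rightmuV}) and the resulting $\epsilon_j(\overline\Phi)$ are stable under the insertion of $W$. This is genuinely a rank-$2$ phenomenon---one must use that rank jumps occur at only two places, so that $\epsilon$ is governed by the coarse rank-$1$/rank-$2$ pattern rather than by the fine filtration---and it is precisely the step that fails in higher rank. A secondary and more routine point is justifying the monotonicity of the maximal Kempf value in the envelope heights that is invoked in the contradiction.
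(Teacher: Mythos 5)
Your proposal is correct and is essentially the argument behind the proof this paper relies on (the lemma is imported from \cite[Lemma 3.5]{GSZ} and \cite[Lemma 2.1.16]{Za2}): translate the refinement into the associated graphs, observe that $v'_{i+1}\geq v_{i+1}$ says exactly that the inserted point lies on or below the Kempf graph, and exclude the contrary by Kempf maximality via the convex-envelope machinery of section \ref{convexcones}, with the rank-$2$ stability of the $\epsilon_{j}$'s under refinement (the content of Lemma \ref{independenceofweightsrk2}, recorded in the paper as equation (\ref{epsilon})) guaranteeing that the two graphs agree at the shared indices. The one step you assert rather than prove---that an envelope which is everywhere $\geq$ and somewhere $>$ another envelope with the same endpoints gives a strictly larger maximal Kempf value---is true (a short Abel-summation/integration-by-parts argument, part of the convexity toolkit of \cite[Section 2]{GSZ}), so flagging it as routine is justified.
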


\begin{lem} \cite[Corollary 1.7]{Si} or \cite[Lemma 2.2]{HL1}
\label{LePSi} Let $r>0$ be an integer. Then there exists a
constant $B$ with the following property: for every torsion free
sheaf $E$ with $0<\rk (E)\leq r$, we have
$$h^{0}(E)\leq \frac{1}{g^{n-1}n!}\big ((\rk(E)-1)([\mu_{max}(E)+B]_{+})^{n}+([\mu_{mim}(E)+B]_{+})^{n}\big)\; ,$$
where $g=\deg \mathcal{O}_{X}(1)$, $[x]_{+}=\max\{0,x\}$, and
$\mu_{max}(E)$ (respectively $\mu_{min}(E)$) is the maximum (resp.
minimum) slope of the Mumford-semistable factors of the
Harder-Narasimhan filtration of $E$.
\end{lem}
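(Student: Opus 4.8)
The plan is to reduce the estimate to the case of a Mumford-semistable sheaf and to establish that case by induction on $n=\dim X$. The reduction rests on two elementary facts. First, the functor $H^0$ is left exact, so for any short exact sequence $0\to E'\to E\to E''\to 0$ one has $h^0(E)\le h^0(E')+h^0(E'')$; applying this along the Harder-Narasimhan filtration of $E$ gives $h^0(E)\le\sum_i h^0(\mathrm{gr}_i)$, where $\mathrm{gr}_1,\dots,\mathrm{gr}_\ell$ are the Mumford-semistable quotients, of ranks $\rho_i$ and slopes $\mu_1=\mu_{\max}(E)>\cdots>\mu_\ell=\mu_{\min}(E)$, with $\sum_i\rho_i=\rk E$. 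Second, granting the semistable estimate
$$h^0(F)\le\frac{\rk F}{g^{n-1}\,n!}\,\big([\mu(F)+B]_+\big)^n \qquad (F\text{ semistable}),$$
one recovers the stated inequality by a purely combinatorial bound: since $\mu_i\le\mu_{\max}(E)$ for all $i$, peeling off one unit of rank from the minimal factor yields
$$\sum_i\rho_i\,[\mu_i+B]_+^{\,n}\le [\mu_{\min}(E)+B]_+^{\,n}+(\rk E-1)\,[\mu_{\max}(E)+B]_+^{\,n},$$
which is precisely the right-hand side up to the common factor $1/(g^{n-1}n!)$.

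So the heart of the matter is the semistable estimate, which I would prove by induction on $n$. For the base case $n=1$, $X$ is a smooth projective curve, and for semistable $F$ one argues directly: if $\mu(F)<0$ then a nonzero section would produce an embedding $\SO_X\inj F$ whose image has nonnegative slope, contradicting semistability, so $h^0(F)=0$; if $\mu(F)\ge 0$ one combines Riemann-Roch with this vanishing, applied to a suitable negative twist, to obtain $h^0(F)\le\rk F\,[\mu(F)+B]_+$ for a constant $B$ depending only on $X$ and the polarization. Since $g^0\,1!=1$, this is exactly the $n=1$ instance of the semistable estimate.

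For the inductive step I would fix a general hyperplane $H\in|\SO_X(1)|$ and use the restriction sequence $0\to F(-1)\to F\to F|_H\to 0$, giving $h^0(F)\le h^0(F(-1))+h^0(F|_H)$. Iterating, and using that the semistable sheaf $F(-k)$ has slope $\mu(F)-kg$ and hence no sections once $kg>\mu(F)+B$, bounds $h^0(F)$ by a sum $\sum_{j\ge 0}h^0\big((F|_H)(-j)\big)$ of boundedly many terms on the $(n-1)$-dimensional variety $H$; by the projection formula the polarized slope is preserved, so $(F|_H)(-j)$ has slope $\mu(F)-jg$. Applying the induction hypothesis in its general (not necessarily semistable) form to each $(F|_H)(-j)$ and summing the resulting $(n-1)$-st powers against $j$ reproduces, after replacing the sum by an integral, the $n$-th power divided by $n!$ together with the factor $g^{-(n-1)}$ and the correct rank coefficient $\rk F=\rk F|_H$.

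The main obstacle is the control of the restriction $F|_H$: although $\mu(F|_H)=\mu(F)$, the restriction need not be semistable, so to feed it into the induction hypothesis I must bound its Harder-Narasimhan slopes uniformly, showing $\mu_{\max}(F|_H)\le\mu(F)+C$ and $\mu_{\min}(F|_H)\ge\mu(F)-C$ for a constant $C=C(\rk F,X)$ independent of $F$. This uniform estimate on the instability of a generic hyperplane restriction — where a restriction theorem of Mehta-Ramanathan/Flenner type, or Simpson's direct bound, enters — is what allows a single constant $B$ to be chosen once and for all and makes the induction close; everything else is Riemann-Roch bookkeeping and the summation of powers.
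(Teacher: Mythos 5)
The paper never proves this lemma: it is quoted directly from \cite[Corollary 1.7]{Si} (equivalently \cite[Lemma 2.2]{HL1}) and used as a black box in the proof of Proposition \ref{boundednessrk2}, so the only thing to compare your argument against is the argument of those references --- which is essentially what you have reconstructed. Your skeleton is the standard Le Potier--Simpson estimate and it is sound: subadditivity of $h^0$ along the Harder--Narasimhan filtration, the peeling bound $\sum_i\rho_i([\mu_i+B]_+)^n\leq([\mu_{\min}(E)+B]_+)^n+(\rk E-1)([\mu_{\max}(E)+B]_+)^n$, the curve case by twisting down until sections vanish and invoking Riemann--Roch, and the induction on $\dim X$ via $0\to F(-1)\to F\to F|_H\to 0$ with the summation-of-powers bookkeeping that produces the factor $1/(g^{n-1}n!)$, all check out.

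The one point you must tighten is the crux you flag yourself. The uniform bounds $\mu_{\max}(F|_H)\leq\mu(F)+C$ and $\mu_{\min}(F|_H)\geq\mu(F)-C$, with $C=C(\rk F,X)$ independent of $F$, do \emph{not} follow from Mehta--Ramanathan: in that theorem the threshold degree of the hypersurface depends on the sheaf being restricted, which is exactly the non-uniformity you cannot afford if a single constant $B$ is to work for all $E$. In characteristic zero --- the paper's setting, $X$ being a smooth complex projective variety --- the correct input is the Grauert--M\"ulich theorem, which for generic $H$ bounds the gaps between consecutive Harder--Narasimhan slopes of $F|_H$ by a constant depending only on $(X,\mathcal{O}_X(1))$; since the rank-weighted average of those slopes equals $\mu(F)$, both uniform bounds follow with $C=(\rk F-1)\cdot\mathrm{const}$. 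Alternatively Flenner's effective restriction theorem works, at the cost of restricting to hypersurfaces of a fixed higher degree $a(r,X)$ and adjusting the bookkeeping. With that substitution your induction closes, and the resulting proof is precisely the one in the cited sources.
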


We denote
$$P_{\mathcal{O}_{X}}(m)=\frac{\alpha_{n}}{n!}m^{n}+\frac{\alpha_{n-1}}{(n-1)!}m^{n-1}+...
+\frac{\alpha_{1}}{1!}m+\frac{\alpha_{0}}{0!}$$ the Hilbert
polynomial of $\mathcal{O}_{X}$, then $\alpha_{n}=g$. Let
$$P(m)=\frac{rg}{n!}m^{n}+\frac{d+r\alpha_{n-1}}{(n-1)!}m^{n-1}+...$$
be the Hilbert polynomial of the sheaf $E$, where $d$ is the degree and $r$ is the rank. Let us call
$A=d+r\alpha_{n-1}$, so
$$P(m)=\frac{rg}{n!}m^{n}+\frac{A}{(n-1)!}m^{n-1}+...$$

Let us define
\begin{equation}
\label{C_constant_rk2} C=\max\{r|\mu _{\max }(E)|+\frac{d}{r}+r|B|+|A|+s\delta_{n-1}(n-1)!+1\;,\;1\},
\end{equation}
a positive constant, where $\delta_{n-1}$ is the leading coefficient of the polynomial $\delta(m)$, of degree $\leq n-1$ (if $\deg \delta< n-1$, set $\delta_{n-1}=0$).

\begin{prop}
\label{boundednessrk2} Given a sufficiently large $m$, each filter in the $m$-Kempf filtration of the $\rk 2$ tensor $(E,\varphi)$ has slope
$\mu(E^{m}_{i})\geq \dfrac{d}{r}-C$.
\end{prop}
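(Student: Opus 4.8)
The plan is to turn the defining maximality of the Kempf filtration into an estimate on $\dim V_i$, and then convert that into a slope bound via the Le Potier--Simpson inequality (Lemma \ref{LePSi}). The constant $C$ of (\ref{C_constant_rk2}) is built from exactly the error terms that appear in such a conversion, which is a strong hint that this is the right route.

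First I would record an elementary consequence of convexity. For the Kempf filtration the graph of Definition \ref{graphrk2} passes through $(b_{m,0},w_{m,0})=(0,0)$ and, since $V_{t+1}=V$ forces $r_{t+1}=r$ and $\epsilon_{t+1}(\overline\Phi)=s$, also through $(b_{m,t+1},w_{m,t+1})=(b_{m,t+1},0)$. By Lemma \ref{lemmaA} the numbers $v_{m,i}$ are strictly increasing, so the successive segment slopes $-v_{m,i}$ are strictly decreasing and the graph is concave; a concave graph whose two endpoints sit at height zero is nonnegative in between, hence $w_{m,i}\geq 0$ for every $i$. Writing this out and substituting $a_2/a_1=r\delta(m)/(P_E(m)-s\delta(m))$ from Theorem \ref{GIT-deltark2} gives the explicit lower bound
$$\dim V_i\ \geq\ \frac{r_i}{r}\dim V+\frac{\delta(m)}{\dim V-s\delta(m)}\big(\epsilon_i(\overline\Phi)\dim V-s\dim V_i\big),$$
where $\dim V=P_E(m)$.

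Next I would feed this into Le Potier--Simpson. Since every section in $V_i$ takes values in the subsheaf $E^m_i$ it generates, one always has $\dim V_i\leq h^0(E^m_i(m))$, with no regularity hypothesis needed in this direction. Applying Lemma \ref{LePSi} to the torsion free sheaf $E^m_i(m)$ of rank $r_i\leq r=2$, and using $\mu_{\max}(E^m_i)\leq\mu_{\max}(E)$ together with $\mu_{\min}(E^m_i)\leq\mu(E^m_i)$, bounds $h^0(E^m_i(m))$ from above by a sum of terms of the shape $[\mu_{\bullet}(E^m_i)+mg+B]_+^{\,n}$, where $g=\deg\mathcal{O}_X(1)$ and the twist contributes $\mu_{\bullet}(E^m_i(m))=\mu_{\bullet}(E^m_i)+mg$. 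Combining this upper bound with the lower bound on $\dim V_i$ from the previous step gives, for $m\gg0$, an inequality forcing $[\mu_{\min}(E^m_i)+mg+B]_+$ to grow at least like $g\,m$; taking $n$-th roots cancels the $mg$ term and leaves a lower bound on $\mu_{\min}(E^m_i)$ that is independent of $m$, hence a lower bound on $\mu(E^m_i)\geq\mu_{\min}(E^m_i)$. The subleading-in-$m$ corrections ($A$ from the Hilbert polynomial, $B$ from Le Potier--Simpson, $r|\mu_{\max}(E)|$ from the rank-$2$ term, and the $s\delta_{n-1}(n-1)!$ contribution coming from $a_2/a_1$) are precisely the quantities collected in $C$, so the result reads $\mu(E^m_i)\geq \frac{d}{r}-C$.

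The main obstacle is the asymptotic bookkeeping needed to hit the constant $C$ exactly: one must expand $\dim V=P_E(m)$ and $\delta(m)$ to subleading order, track the twist carefully, and separate the full-rank case $r_i=r$, where the leading terms in the $\dim V_i$ bound cancel and the estimate is governed entirely by the $O(m^{n-1})$ correction and the $[\mu_{\max}(E)+mg+B]_+^{\,n}$ term, from the rank-$1$ case, where the single $\mu_{\min}$ term already dominates and the argument is cleanest. A secondary point to verify is uniformity: all the estimates hold simultaneously for every $i$ once $m$ exceeds a threshold depending only on $E$, $\varphi$ and $\delta$ (in particular once $\dim V-s\delta(m)>0$ and the leading terms dominate), which is the meaning of ``sufficiently large $m$'' in the statement.
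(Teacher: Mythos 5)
Your proposal is correct and follows essentially the same route as the paper: convexity of the Kempf graph (Lemma \ref{lemmaA}) together with $w_{t+1}=0$ forces $w_i\geq 0$, this is combined with $\dim V_i\leq h^0(E^m_i(m))$ and the Le Potier--Simpson bound (Lemma \ref{LePSi}) via $\mu_{\max}(E^m_i)\leq\mu_{\max}(E)$, $\mu_{\min}(E^m_i)\leq\mu(E^m_i)$, and the constant $C$ of (\ref{C_constant_rk2}) absorbs the subleading terms. The only differences are organizational: the paper argues by contradiction (assuming $\mu(E^m_i)<\frac{d}{r}-C$ and showing the resulting polynomial $\Psi(m)$ has vanishing top coefficient and negative next coefficient, forcing $w_i<0$), whereas you argue directly and extract the slope bound by taking $n$-th roots, which is the same asymptotic comparison.
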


\begin{pr}
The proof follows analogously to \cite[Proposition 4.8]{GSZ}. Choose an $m_{1}$ such that for $m\geq m_{1}$
$$[\mu_{max}(E)+gm+B]_{+}=\mu_{max}(E)+gm+B$$
and
$$[\frac{d}{r}-C+gm+B]_{+}=\frac{d}{r}-C+gm+B\; .$$
Let $m_{2}$ be such that $P_{E}(m)-s\delta(m)>0$ for $m\geq
m_{2}$. Now consider $m\geq \max\{m_{0},m_{1},m_{2}\}$ and let
$$0\subseteq (E^{m}_{1},\varphi|_{E_{1}^{m}})\subseteq
(E^{m}_{2},\varphi|_{E_{2}^{m}})\subseteq\cdots
(E^{m}_{t},\varphi|_{E_{t}^{m}})\subseteq
(E^{m}_{t+1},\varphi|_{E_{t+1}^{m}})=(E,\varphi)$$
be the $m$-Kempf filtration.

Suppose that we have a filter $E^{m}_{i}\subseteq E$, of rank $r_{i}$ and degree $d_{i}$, such that $\mu
(E^{m}_{i})<\frac{d}{r}-C$. The subsheaf $E_{i}^{m}(m)\subset E(m)$ satisfies the estimate in Lemma \ref{LePSi},
$$h^{0}(E^{m}_{i}(m))\leq \frac{1}{g^{n-1}n!}\big ((r_{i}-1)
([\mu_{max}(E^{m}_{i})+gm+B]_{+})^{n}+([\mu_{min}(E^{m}_{i})+gm+B]_{+})^{n}\big)\; .$$

Given that $\mu_{max}(E_{i}^{m})\leq \mu_{max}(E)$ and $\mu_{min}(E_{i}^{m})\leq \mu(E_{i}^{m})<\frac{d}{r}-C$,
and using the choice of $m$,
$$h^{0}(E^{m}_{i}(m))\leq \frac{1}{g^{n-1}n!}\big ((r_{i}-1)
(\mu_{max}(E)+gm+B)^{n}+(\frac{d}{r}-C+gm+B)^{n}\big)=G(m)\; ,$$
where
$$G(m)=\frac{1}{g^{n-1}n!}\big
[r_{i}g^{n}m^{n}+ng^{n-1}\big((r_{i}-1)\mu_{max}(E)+\frac{d}{r}-C+r_{i}B\big)m^{n-1}+\cdots
\big ]\; .$$

By Definition \ref{graphrk2}, to the $m$-Kempf filtration we
associate the graph given by
$$w_{j}=w^{1}+\ldots +w^{j}=m\cdot \frac{1}{\dim V}\big[r\dim V_{j}-r_{j}\dim V+\dfrac{a_{2}}{a_{1}}(s\dim V_{j}-\epsilon_{j}
(\overline\Phi)\dim V )\big]\; .$$ We will get a contradiction by showing that $w_{i}<0$. Indeed, if $w_{i}<0$
there is a $j<i$ such that $-v_{j}<0$. Hence, as the graph is convex by Lemma \ref{lemmaA} the rest of the
slopes of the graph are negative, $-v_{k}<0$, $k\geq i$. Then $w_{i}>w_{i+1}>\ldots w_{t+1}$, and $w_{t+1}<0$.
But it is
$$w_{t+1}=m\cdot \frac{1}{\dim V}\big[r\dim V_{t+1}-r_{t+1}\dim V+\dfrac{a_{2}}{a_{1}}(s\dim V_{t+1}-\epsilon_{t+1}
(\overline\Phi)\dim V )\big]=0\; ,$$
because $r_{t+1}=r$, $V_{t+1}=V$ and $\epsilon_{t+1}
(\overline\Phi)=s$, then the contradiction.

Since $E^{m}_{i}(m)$ is generated by $V_{i}$ under the evaluation map, it is $\dim V_{i}\leq
H^{0}(E^{m}_{i}(m))$, hence
$$w_{i}=\frac{m}{\dim V}\big[r\dim V_{i}-r_{i}\dim V+\dfrac{a_{2}}{a_{1}}(s\dim V_{i}-\epsilon_{i}(\overline\Phi)\dim V )\big]\leq$$
$$\frac{m}{P_{E}(m)}\big[rh^{0}(E^{m}_{i}(m))-r_{i}P_{E}(m)+\dfrac{r\delta(m)}{P_{E}(m)-s\delta(m)}(sh^{0}(E^{m}_{i}(m))-
\epsilon_{i}(\overline\Phi)P_{E}(m))\big]\leq$$
$$m\cdot \frac{\big[(P_{E}(m)-s\delta(m))(rG(m)-r_{i}P_{E}(m))+
(r\delta(m))(sG(m)-\epsilon_{i}(\overline\Phi)P_{E}(m))\big]}{P_{E}(m)(P_{E}(m)-s\delta(m))}\;
.$$

Then, $w_{i}<0$ is equivalent to
$$\Psi(m)=(P_{E}(m)-s\delta(m))(rG(m)-r_{i}P_{E}(m))+(r\delta(m))(sG(m)-\epsilon_{i}(\overline\Phi)P_{E}(m))<0\; ,$$
and $\Psi(m)=\xi_{2n}m^{2n}+\xi_{2n-1}m^{2n-1}+\cdots
+\xi_{1}m+\xi_{0}$ is a $(2n)^{th}$-order polynomial, whose higher order coefficient is
$$\xi_{2n}=(P_{E}(m)-s\delta(m))_{n}(rG(m)-r_{i}P_{E}(m))_{n}+(r\delta(m))_{n}(sG(m)-
\epsilon_{i}(\overline\Phi)P_{E}(m))_{n}=$$
$$(P_{E}(m)-s\delta(m))_{n}(r\frac{r_{i}g}{n!}-r_{i}\frac{rg}{n!})+0=0\; .$$
The $(2n-1)^{th}$-order coefficient is
$$\xi_{2n-1}=(P_{E}(m)-s\delta(m))_{n}(rG(m)-r_{i}P_{E}(m))_{n-1}+(r\delta(m))_{n-1}(sG(m)-
\epsilon_{i}(\overline\Phi)P_{E}(m))_{n}=$$
$$\frac{rg}{n!}(rG_{n-1}-r_{i}\frac{A}{(n-1)!})+r\delta_{n-1}(s\frac{r_{i}g}{n!}-
\epsilon_{i}(\overline\Phi)\frac{rg}{n!})\; ,$$ where $G_{n-1}$ is
the $(n-1)^{th}$-coefficient of the polynomial $G(m)$,
$$G_{n-1}=\frac{1}{g^{n-1}n!}ng^{n-1}((r_{i}-1)\mu_{max}(E)+\frac{d}{r}-C+r_{i}B)=$$
$$\frac{1}{(n-1)!}((r_{i}-1)\mu_{max}(E)+\frac{d}{r}-C+r_{i}B)\leq$$
$$\frac{1}{(n-1)!}((r_{i}-1)|\mu_{max}(E)|+\frac{d}{r}-C+r_{i}|B|)\leq$$
$$\frac{1}{(n-1)!}(r|\mu_{max}(E)|+\frac{d}{r}-C+r|B|)<\frac{-|A|}{(n-1)!}-s\delta_{n-1}\; ,$$
where last inequality comes from the definition of $C$ in
(\ref{C_constant_rk2}). Then
$$\xi_{2n-1}<\frac{rg}{n!}\big(r(\frac{-|A|}{(n-1)!}-s\delta_{n-1})-r_{i}\frac{A}{(n-1)!}\big )+r\delta_{n-1}\big(\frac{r_{i}gs}{n!}-
\epsilon_{i}(\overline\Phi)\frac{rg}{n!}\big)=$$
$$\frac{rg}{n!}\big [ \big (\frac{-r|A|-r_{i}A}{(n-1)!}\big )
-rs\delta_{n-1}+\delta_{n-1}(r_{i}s-\epsilon_{i}(\overline\Phi)r)\big
]=$$
$$\frac{rg}{n!}\big [ \big (\frac{-r|A|-r_{i}A}{(n-1)!}\big )
+\delta_{n-1}(-rs +r_{i}s-\epsilon_{i}(\overline\Phi)r)\big ]<\frac{rg}{n!}\delta_{n-1}(-rs
+r_{i}s-\epsilon_{i}(\overline\Phi)r)\; ,$$ because $-r|A|-r_{i}A<0$. Last expression is always negative because, 
if $r_{i}<r$, 
$$-rs+r_{i}s-\epsilon_{i}(\overline\Phi)r=-(r-r_{i})s-\epsilon_{i}(\overline\Phi)r\leq -\epsilon_{i}(\overline\Phi)r\leq 0\; , $$
with equality if and only if $r_{i}=r$, and if $r_{i}=r$, then $\epsilon_{i}(\overline\Phi)=s$, and 
$$-rs+r_{i}s-\epsilon_{i}(\overline\Phi)r=-rs<0\; .$$
Hence, it is $\xi_{2n-1}<0$.

Therefore $\Psi(m)=\xi_{2n-1}m^{2n-1}+\cdots +\xi_{1}m+\xi_{0}$
with $\xi_{2n-1}<0$, so there exists an integer $m_{3}$ such that for $m\geq
\{m_{0},m_{1},m_{2},m_{3}\}$ we have $\Psi(m)<0$ and
$w_{i}<0$, then the contradiction.
\end{pr}

Once we have seen that all possible filters in the different $m$-Kempf filtrations have their numerical
invariants bounded, and all of them are subsheaves of the same sheaf, we can prove the following:

\begin{prop}
\label{regularrk2} There exists an integer $m_{4}$ such that for
$m\geq m_{4}$ the sheaves $E^{m}_{i}$ and
$E^{m,i}=E^m_i/E^m_{i-1}$ are $m_{4}$-regular. In particular their
higher cohomology groups, after twisting with
$\mathcal{O}_{X}(m_{4})$, vanish and they are generated by global
sections.
\end{prop}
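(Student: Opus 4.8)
The plan is to prove a uniform boundedness statement for all the subsheaves that can occur as filters $E^m_i$ in the various $m$-Kempf filtrations, and then to invoke the fact that a bounded family of sheaves on the fixed projective variety $X$ admits a single integer bounding all their Castelnuovo--Mumford regularities. This runs parallel to the argument in \cite{GSZ} for sheaves, now using the slope estimate of Proposition \ref{boundednessrk2} as the essential new input.

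First I would assemble a two-sided slope control. Every filter $E^m_i$ is, by construction, a subsheaf of the fixed torsion-free sheaf $E$ of rank $r=2$, so its rank is at most $2$ and $\mu_{max}(E^m_i)\leq \mu_{max}(E)$. Proposition \ref{boundednessrk2} supplies, for $m$ large, the lower bound $\mu(E^m_i)\geq \frac{d}{r}-C$ that is uniform in both $m$ and $i$. The point at which the rank $2$ hypothesis is decisive is that these bounds together force $\mu_{min}(E^m_i)$ to be bounded below as well: if $\rk E^m_i=1$ then $\mu_{min}=\mu=\mu_{max}$ is already trapped in the interval $[\frac{d}{r}-C,\mu_{max}(E)]$; if $\rk E^m_i=2$ then $\deg E^m_i\geq 2(\frac{d}{r}-C)$, and since the top Harder--Narasimhan slope satisfies $\mu_{max}(E^m_i)\leq\mu_{max}(E)$, the bottom slope obeys $\mu_{min}(E^m_i)=\deg E^m_i-\mu_{max}(E^m_i)\geq 2(\frac{d}{r}-C)-\mu_{max}(E)$, again a fixed lower bound.

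With the rank bounded by $2$ and $\mu_{min}$ bounded below uniformly, I would apply the standard boundedness criterion for subsheaves of a fixed sheaf with bounded slope (Grothendieck's lemma on boundedness of subsheaves, c.f. \cite{HL2}) to conclude that the collection of all the $E^m_i$, ranging over all admissible $m$ and $i$, is a bounded family. By the definition of boundedness there is a single integer $m_4$ for which every member $E^m_i$ is $m_4$-regular; in particular $H^j(E^m_i(m_4))=0$ for $j>0$ and $E^m_i(m_4)$ is globally generated.

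Finally, for the graded pieces $E^{m,i}=E^m_i/E^m_{i-1}$ I would transport regularity through the short exact sequence $0\to E^m_{i-1}\to E^m_i\to E^{m,i}\to 0$. In the associated long exact cohomology sequence $H^j(E^{m,i}(m_4-j))$ is squeezed between $H^j(E^m_i(m_4-j))$ and $H^{j+1}(E^m_{i-1}(m_4-j))$; both vanish for $j\geq 1$, the first because $E^m_i$ is $m_4$-regular and the second because $m_4$-regularity of $E^m_{i-1}$ persists under increasing the twist, so $E^{m,i}$ is $m_4$-regular as well. Enlarging $m_4$ once so that it simultaneously bounds the regularity of the whole family settles the statement for filters and quotients at once. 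The only genuine obstacle is the boundedness step, and it is overcome entirely by the rank $2$ observation above, which upgrades the single slope bound of Proposition \ref{boundednessrk2} into a two-sided control of the Harder--Narasimhan polygon.
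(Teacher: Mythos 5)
Your plan has the same skeleton as the proof the paper actually appeals to (its ``proof'' of Proposition \ref{regularrk2} is just a citation of \cite[Proposition 3.9]{GSZ}): use the uniform slope bound of Proposition \ref{boundednessrk2} to conclude that all filters occurring in any $m$-Kempf filtration form a bounded family, and then use the standard fact that a bounded family admits a single regularity bound $m_4$. Your treatment of the graded pieces is a correct minor variant of that template: in $0\to E^m_{i-1}\to E^m_i\to E^{m,i}\to 0$, the $m_4$-regularity of $E^m_i$ together with the $(m_4+1)$-regularity of $E^m_{i-1}$ (which follows from its $m_4$-regularity) forces $H^j(E^{m,i}(m_4-j))=0$ for $j>0$, so the quotients come along for free once the filters are uniformly regular.

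The problem is in the step you yourself identify as the crux, which is also the only place where you genuinely depart from the cited argument. Grothendieck's lemma as stated in \cite{HL2} (Lemma 1.7.9) bounds the family of \emph{saturated} subsheaves of a fixed sheaf whose slope is bounded below (equivalently, pure quotients with slope bounded above). It makes no use of a lower bound on $\mu_{min}$, so your rank-$2$ computation is not an input that lemma can consume; and the hypothesis the lemma does require --- saturation --- is exactly what the Kempf filters $E^m_i$, being images of evaluation maps $V_i\otimes\mathcal{O}_X(-m)\to E$, are not known to satisfy at this stage. Nor can two-sided slope control substitute for saturation once $\dim X\geq 2$: for a rank $1$ subsheaf $L\subset E$ and $Z\subset X$ of codimension $\geq 2$, the subsheaves $I_Z\otimes L\subset E$ all have the same rank, degree, $\mu_{max}$ and $\mu_{min}$ as $L$, yet they form an unbounded family as the length of $Z$ grows (and no single $m_4$ regularizes them all), because slope invariants are blind to codimension-two modifications. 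On a curve this issue disappears and no upgrade of Proposition \ref{boundednessrk2} is needed at all; but the proposition is asserted over $X$ of arbitrary dimension $n$. So the closing claim that the boundedness obstacle ``is overcome entirely by the rank $2$ observation'' is not justified: the two-sided bound is neither required by, nor sufficient to establish, boundedness of the family of filters, and your proof in fact still rests on precisely the boundedness assertion that the paper imports wholesale from \cite{GSZ}.
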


\begin{pr}
C.f. \cite[Proposition 3.9]{GSZ}.
\end{pr}

\begin{prop}
\label{taskrk2}
Let $m\geq m_{4}$. For each filter $E_{i}^{m}$ in the $m$-Kempf filtration of the $\rk 2$ tensor $(E,\varphi)$, we
have $\dim V_{i}=h^{0}(E_{i}^{m}(m))$, therefore $V_{i}\cong H^{0}(E_{i}^{m}(m))$.
\end{prop}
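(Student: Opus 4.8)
The plan is to prove the two inequalities $\dim V_i \le h^0(E_i^m(m))$ and $\dim V_i \ge h^0(E_i^m(m))$ separately. For the first I would invoke Proposition \ref{regularrk2}: for $m \ge m_4$ the sheaf $E_i^m$ is $m$-regular, so $E_i^m(m)$ is globally generated and the evaluation map realizes $V_i$ as a subspace of $H^0(E_i^m(m)) \subseteq H^0(E(m)) \cong V$; hence $\dim V_i \le h^0(E_i^m(m))$ holds unconditionally. All the content lies in the reverse inequality, which I would establish by \emph{descending} induction on $i$. The base case $i = t+1$ is immediate, since $E_{t+1}^m = E$ and $V_{t+1} = V \cong H^0(E(m))$.

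For the inductive step, assume $V_{i+1} = H^0(E_{i+1}^m(m))$ and set $W := H^0(E_i^m(m))$. The inclusion $E_i^m \subseteq E_{i+1}^m$ gives $W \subseteq H^0(E_{i+1}^m(m)) = V_{i+1}$, so $V_i \subseteq W \subseteq V_{i+1}$. Suppose for contradiction that $V_i \subsetneq W$. The key observation is that $V_i$ and $W$ generate the \emph{same} subsheaf $E_i^m$ (by $m$-regularity), hence share the invariants $r_i$ and $\epsilon_i(\overline\Phi)$; therefore, in the graph of Definition \ref{graphrk2}, the points attached to $V_i$ and to $W$ both lie on the line of slope $\kappa := \frac{m^{n+1}}{\dim V}\big(r + \frac{a_2}{a_1}s\big) > 0$, because $w_{m,\bullet}$ is an affine function of $\dim V_\bullet$ of exactly this slope once the sheaf invariants are frozen. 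If $W \subsetneq V_{i+1}$, I would insert $W$ between $V_i$ and $V_{i+1}$; the new segment then has slope $\kappa$, i.e. $v'$-value $-\kappa$, and Lemma \ref{lemmaB} forces $-\kappa = v'_{i+1} \ge v_{i+1}$. If instead $W = V_{i+1}$, then $H^0(E_i^m(m)) = H^0(E_{i+1}^m(m))$, and global generation forces $E_i^m = E_{i+1}^m$, so $r^{i+1} = \epsilon^{i+1}(\overline\Phi) = 0$ and $v_{i+1} = -\kappa$ directly. In either case $v_{i+1} \le -\kappa$.

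This contradicts convexity. Rewriting Definition \ref{graphrk2} gives $v_j = -\kappa + \frac{m^{n+1}}{\dim V^j}\big(r^j + \frac{a_2}{a_1}\epsilon^j(\overline\Phi)\big)$, and since $r^j \ge 0$ and $\epsilon^j(\overline\Phi) \ge 0$ we have $v_j \ge -\kappa$ for every $j$; in particular $v_i \ge -\kappa$, while Lemma \ref{lemmaA} yields $v_i < v_{i+1}$, so $v_{i+1} > -\kappa$, a contradiction. Hence $V_i = W = H^0(E_i^m(m))$, completing the induction. The main obstacle I anticipate is precisely the step that legitimizes the insertion: a priori $H^0(E_i^m(m))$ need not sit inside $V_{i+1}$, and it is only the descending induction — guaranteeing that the next filter up is already saturated — that places $W$ between $V_i$ and $V_{i+1}$ so that the refinement Lemma \ref{lemmaB} applies. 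The degenerate case $W = V_{i+1}$ must be handled on its own, but it collapses at once, since it would force two consecutive filters to generate the same sheaf, which the strict convexity of Lemma \ref{lemmaA} forbids.
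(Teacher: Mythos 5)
Your proof is correct and is essentially the paper's own argument: you insert $W=H^{0}(E_{i}^{m}(m))$ as a refinement of the Kempf filtration, observe that the new segment has the extremal slope $R=\frac{m^{n+1}}{\dim V}\bigl(r+s\frac{a_{2}}{a_{1}}\bigr)$ (your $\kappa$) because $V_{i}$ and $W$ generate the same subsheaf and hence carry the same invariants $r_{i}$, $\epsilon_{i}(\overline\Phi)$, and then derive the contradiction by combining convexity (Lemma \ref{lemmaA}) with the refinement property (Lemma \ref{lemmaB}). The only point where you go beyond the paper is making explicit, via descending induction, the saturation $V_{i+1}=H^{0}(E_{i+1}^{m}(m))$ that legitimizes placing $W$ between $V_{i}$ and $V_{i+1}$ --- a fact the paper's diagram \eqref{filtrationVrk2} displays without justification --- together with the degenerate case $W=V_{i+1}$; so your write-up is a more careful rendering of the same proof.
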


\begin{pr}
Let $V_{\bullet}\subseteq V$ be the Kempf filtration of $V$ (c.f..
Theorem \ref{kempftheoremsheaves}) and let
$(E_{\bullet}^{m},\varphi|_{E_{\bullet}^{m}})\subseteq
(E,\varphi)$ be the $m$-Kempf filtration of $(E,\varphi)$. We can
construct two filtrations:

\begin{equation}
 \label{filtrationVrk2}
    \begin{array}{ccccccccccccc}
    0 & \subset & \cdots & \subset & V_{i} & \subset & V_{i+1} & \subset & V_{i+2} & \subset & \cdots & \subset & V \\
    & & & & \cap & & || & & ||& & & &   \\
    & & & & H^{0}(E_{i}^{m}(m)) & \subset & H^{0}(E_{i+1}^{m}(m)) & \subset & H^{0}(E_{i+2}^{m}(m)) &  &  & &
    \end{array}
\end{equation}

and

\begin{equation}
\label{filtrationV'rk2}
    \begin{array}{ccccccccccccc}
    0 & \subset & \cdots & \subset & V_{i} & \subset & H^{0}(E_{i}^{m}(m)) & \subset & V_{i+1} & \subset & \cdots & \subset &  V\\
    & & & & || & & || & & || & & & &   \\
    & & & & V'_{i} & &  V'_{i+1} & & V'_{i+2} & & & &
    \end{array}
\end{equation}
to be in situation of Lemma
\ref{lemmaB}, where $W=H^{0}(E_{i}^{m}(m))$, filtration $V_{\bullet}$ is \eqref{filtrationVrk2} and filtration
$V'_{\bullet}$ is \eqref{filtrationV'rk2}.

Now, the graph associated to the filtration $V_{\bullet}$ is
given, by Definition \ref{graphrk2}, by the points
$$(b_{i},w_{i})=(\dfrac{\dim V_{i}}{m^{n}},\frac{m}{\dim V}\big(r\dim V_{i}-r_{i}\dim V+\dfrac{a_{2}}{a_{1}}(s\dim V_{i}-\epsilon_{i}
(\overline\Phi,V_{\bullet})\dim V))\big)\; ,$$
the slopes $-v_{i}$ of the graph given by

$$-v_{i}=\frac{w^{i}}{b^{i}}=\frac{w_{i}-w_{i-1}}{b_{i}-b_{i-1}}=
\frac{m^{n+1}}{\dim V}\big(r-r^{i}\frac{\dim V}{\dim V^{i}}+\frac{a_{2}}{a_{1}}(s-\epsilon^{i}(\overline\Phi,V_{\bullet})\frac{\dim V}
{\dim V^{i}})\big)\leq$$
$$\frac{m^{n+1}}{\dim V}\big(r+s\frac{a_{2}}{a_{1}}\big):=R\; ,$$
and equality holds if and only if $r^{i}=0$ (note that $r^{i}=0$ implies $\epsilon^{i}(\overline\Phi,V_{\bullet})=0$).

The new point which appears in the graph of the filtration
$V'_{\bullet}$ is
$$Q=\big(\dfrac{h^{0}(E_{i}^{m}(m))}{m^{n}},\frac{m}{\dim V}(rh^{0}(E_{i}^{m}(m))-r_{i}\dim V+\dfrac{a_{2}}{a_{1}}
(sh^{0}(E_{i}^{m}(m))- \epsilon_{i}(\overline\Phi,V_{\bullet})\dim
V))\big)\; .$$ Note that
\begin{equation}
\label{epsilon}
    \begin{array}{c}
\epsilon_{j}(\overline{\Phi},V'_{\bullet})=\epsilon_{j}(\overline{\Phi},V_{\bullet})\;,\;j\leq i\\
\epsilon_{j}(\overline{\Phi},V'_{\bullet})=\epsilon_{j-1}(\overline\Phi,V_{\bullet})\;,\;j>i.
    \end{array}
\end{equation}
This is the reason why we write $\epsilon_{i}(\overline\Phi,V_{\bullet})$ instead of
$\epsilon_{i}(\overline\Phi,V'_{\bullet})$.

The slope of the segment between $(b_{i},w_{i})$ and $Q$ is, similarly,
$$-v'_{i}=\dfrac{m^{n+1}}{\dim V}(r+s\dfrac{a_{2}}{a_{1}})=R\; .$$

By Lemma \ref{lemmaA}, the graph is convex, so $v_{1}<v_{2}<\ldots<v_{t+1}$. Besides, $r^{1}=r_{1}>0$, then
$-R<v_{1}$. This is because $E$ is torsion free, hence $E_{1}^{m}\subset E$ also has no torsion, and a rank $0$
torsion free sheaf is the zero sheaf. On the other hand, the graph associated to $V_{\bullet}'\subset V$ is a
refinement of the one associated to the Kempf filtration, $V_{\bullet}\subset V$, then we apply Lemma
\ref{lemmaB} and get $v'_{i}\geq v_{i}$. Hence,
$$-R<v_{1}<v_{2}<\ldots<v_{i}\leq v'_{i}=-R\; ,$$
which is a contradiction.

Therefore, $\dim V_{i}=h^{0}(E_{i}^{m}(m))$, for every filter in the $m$-Kempf filtration.
\end{pr}

\begin{cor}
\label{rankrk2} Let $m\geq m_{4}$. For every filter $E_{i}^{m}$ in
the $m$-Kempf filtration of the $\rk 2$ tensor $(E,\varphi)$, it
is $r^{i}>0$. Therefore, the $m$-Kempf filtration consists on a
rank $1$ subsheaf, $0\subset (L^{m},\varphi|_{L^{m}})\subset (E,\varphi)$.
\end{cor}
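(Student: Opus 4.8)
The plan is to read off the ranks directly from the convexity of the Kempf graph, exploiting the fact that the slope of each segment already detects whether the corresponding graded rank $r^i$ vanishes. The whole statement should then fall out by combining one strict inequality with Lemma \ref{lemmaA} and a little rank bookkeeping, so I expect no serious computation beyond what is already in hand.

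First I would recall the slope computation carried out in the proof of Proposition \ref{taskrk2}: for the Kempf filtration the $i$-th segment of the associated graph has slope
$$-v_i=\frac{m^{n+1}}{\dim V}\Big(r-r^i\frac{\dim V}{\dim V^i}+\frac{a_2}{a_1}\big(s-\epsilon^i(\overline\Phi)\tfrac{\dim V}{\dim V^i}\big)\Big)\leq R:=\frac{m^{n+1}}{\dim V}\Big(r+s\frac{a_2}{a_1}\Big),$$
where the inequality is an equality precisely when $r^i=0$ (note that $r^i=0$ forces $\epsilon^i(\overline\Phi)=0$). Thus proving $r^i>0$ is equivalent to establishing the strict inequality $-v_i<R$ for every $i$.

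Next I would launch the argument from the first step of the filtration. Since $E$ is torsion free and $E_1^m\subset E$ is the nonzero subsheaf generated by $V_1$ under the evaluation map, a vanishing rank would exhibit $E_1^m$ as a torsion subsheaf of a torsion free sheaf, hence the zero sheaf; therefore $r^1=r_1\geq 1>0$, and by the equality characterization above $-v_1<R$ \emph{strictly}. By Lemma \ref{lemmaA} the graph is convex, i.e. the slopes $-v_i$ are strictly decreasing, so $-v_i\leq -v_1<R$ for every $i$. This gives $r^i>0$ for every filter, which is the first assertion. I would then convert this into the stated shape of the filtration by rank counting: strict positivity of each $r^i$ forces $r_1<r_2<\cdots<r_{t+1}=r=\rk E=2$ to be a strictly increasing sequence of positive integers ending in $2$, which can contain at most two terms, so $t+1\leq 2$. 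As $(E,\varphi)$ is $\delta$-unstable its Kempf filtration is nontrivial, whence $t=1$ and $r_1=1$; writing $L^m:=E_1^m$, the $m$-Kempf filtration is exactly $0\subset (L^m,\varphi|_{L^m})\subset (E,\varphi)$ with $\rk L^m=1$.

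I do not expect a genuine obstacle: the substance is already encoded in the slope formula and in Lemma \ref{lemmaA}. The only point requiring care is securing the \emph{strict} inequality $-v_1<R$, which rests entirely on the torsion-freeness of $E$ (a rank $0$ subsheaf would have to vanish). It is this strictness that, propagated through the convexity of the graph, rules out any intermediate segment of maximal slope $R$ and thereby forbids $r^i=0$ for all remaining $i$.
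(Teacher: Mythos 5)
Your proposal is correct and follows essentially the same route as the paper: it uses the slope computation from Proposition \ref{taskrk2} (where $-v_i = R$ if and only if $r^i = 0$), the strict bound $-v_1 < R$ coming from torsion-freeness of $E$ (so $r^1 = r_1 > 0$), and the convexity of the graph from Lemma \ref{lemmaA} to propagate $-v_i < R$, hence $r^i > 0$, to every filter. The only difference is cosmetic: you spell out the final rank bookkeeping ($t=1$, $r_1=1$ via nontriviality of the Kempf filtration for an unstable tensor), which the paper leaves implicit in ``imply the statement.''
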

\begin{pr}
By Proposition \ref{taskrk2}, $r^{i}=0$ is equivalent to
$-v_{i}=R$. Then, $r^{1}=r_{1}>0$ and
$-R<v_{1}<v_{2}<\ldots<v_{t+1}$ imply the statement.
\end{pr}

For any $m\geq m_{4}$, by Corollary \ref{rankrk2} there is only one filter $(L^{m},\varphi|_{L^{m}})$ in the $m$-Kempf filtration and,
by Proposition \ref{regularrk2}, $L^{m}$ is $m_{4}$-regular. Hence, $L^{m}(m_{4})$ is generated by the subspace
$H^{0}(L^{m}(m_{4}))\subset H^{0}(E(m_{4}))$ by the evaluation map. Note that the
dimension of the vector space $H^{0}(E(m_{4}))$ does not depend on $m$.

We call \emph{$m$-type} of the $m$-Kempf filtration to the Hilbert
polynomial $P_{L^{m}}$. Once we fix $V\simeq H^{0}(E(m_{4}))$ whose
dimension does not depend on $m$, all possible filtrations of $V$
are parametrized by a finite-type scheme, hence the set of
possible $m$-types
$$\mathcal{P}=\big\{P_{L^{m}}\big\}$$
is finite, for all integers $m\geq m_{4}$.

Rewrite the graph associated to the $m$-Kempf filtration (c.f. Definition \ref{graphrk2})
$$v_{m,i}=\frac{m^{n+1}}{\dim V^{i}\dim V}\big[r^{i}\dim V-r\dim V^{i}+\dfrac{a_{2}}{a_{1}}(\epsilon^{i}
(\overline\Phi)\dim V-s\dim V^{i})\big]\; ,$$
$$b_{m}^{i}=\frac{1}{m^{n}}\cdot \dim V^{i}\; ,$$
as
$$v_{m,i}=\frac{m^{n+1}}{P_{m}^{i}(m)P(m)}\big[r^{i}P(m)-rP_{m}^{i}(m)+\dfrac{r\delta(m)}{P(m)-s\delta(m)}(\epsilon^{i}
(\overline\Phi)P(m)-sP_{m}^{i}(m))\big]\; ,$$
$$b_{m}^{i}=\frac{1}{m^{n}}\cdot P_{m}^{i}(m)\; ,$$
by Propositions \ref{regularrk2} and \ref{taskrk2}.

Note that, by Corollary \ref{rankrk2}, the graph has only two slopes given by
$$v_{m,1}=\frac{m^{n+1}}{P_{L^{m}}(m)P(m)}\big[P(m)-2P_{L^{m}}(m)+\dfrac{2\delta(m)}{P(m)-s\delta(m)}(\epsilon_{L^{m}}
P(m)-sP_{L^{m}}(m))\big]\; ,$$
$$v_{m,2}=\frac{m^{n+1}}{P_{E/L^{m}}(m)P(m)}\big[P(m)-2P_{E/L^{m}}(m)+\dfrac{2\delta(m)}{P(m)-s\delta(m)}((s-\epsilon_{L^{m}})
P(m)-sP_{E/L^{m}}(m))\big]\; ,$$ where $\epsilon(L^{m})$ is the number of times that the subsheaf $L^{m}$
appears on the minimal multi-index (c.f. (\ref{rightmuV})).

The set
$$
\mathcal{A}=\{\Theta_{m}:m\geq m_{4}\}\; ,$$
where
$$
\Theta_{m}(l)=(\mu_{v_{m}(l)}(\Gamma_{v_{m}(l)}))^{2}=||v_{m}(l)||^{2}\;
,
$$
is finite because the set of $m$-types, $\mathcal{P}$, is.  We say
that $f_{1}\prec f_{2}$ for two rational functions, if the
inequality $f_{1}(l)<f_{2}(l)$ holds for $l\gg 0$. Let $K$ be the
maximal function in the finite set $\mathcal{A}$, with respect to
the defined ordering. The function $K$ verifies that there exists
an integer $m_{5}$ such that, for all $m\geq m_{5}$, it is
$\Theta_{m}=K$ (c.f. \cite[Lemma 5.2]{GSZ}). 

\begin{prop}
\label{eventuallyrk2} Let $l_{1}$ and $l_{2}$ be integers with
$l_{1}\geq l_{2} \geq m_{5}$. Then, the $l_{1}$-Kempf filtration
of $E$ is equal to the $l_{2}$-Kempf filtration of $E$.
\end{prop}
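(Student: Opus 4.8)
The plan is to show that the $l_2$-Kempf filtration $0\subset L^{l_2}\subset E$ is \emph{also} a maximiser of the Kempf function for the parameter space built with $l_1$, and then to invoke the uniqueness of the maximiser in Theorem \ref{kempftheoremsheaves} to conclude $L^{l_1}=L^{l_2}$. By Corollary \ref{rankrk2} both Kempf filtrations consist of a single rank $1$ subsheaf, $L^{l_1}$ and $L^{l_2}$, and since $l_1,l_2\geq m_5$ the discussion preceding the statement gives $\Theta_{l_1}=\Theta_{l_2}=K$; that is, writing $v^{[L]}(l)$ for the graph vector attached to a single-filter type $L$ by Definition \ref{graphrk2} but evaluated at the free variable $l$, one has the equality of rational functions $\|v^{[L^{l_1}]}(l)\|^2=\|v^{[L^{l_2}]}(l)\|^2=K(l)$.

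First I would feed $L^{l_2}$ into the $l_1$-problem. As $l_1\geq l_2\geq m_5\geq m_4$, Proposition \ref{regularrk2} together with the monotonicity of Castelnuovo--Mumford regularity shows that $L^{l_2}$ is $l_1$-regular, so $W:=H^0(L^{l_2}(l_1))\subset H^0(E(l_1))$ generates $L^{l_2}$ under evaluation and $0\subset W\subset H^0(E(l_1))$ is a legitimate one-step filtration for the $l_1$-problem, with $\dim W=P_{L^{l_2}}(l_1)$. Because the numbers $\epsilon$ depend only on the restriction of $\varphi$ to the filter and not on the twisting integer (Lemma \ref{independenceofweightsrk2}), the vector attached to this filtration is precisely $v^{[L^{l_2}]}(l_1)$. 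A direct computation for a one-step filtration (where the $1$-parameter subgroup direction is forced) shows that the value of the Kempf function of Theorem \ref{kempftheoremsheaves} equals $l_1^{-n/2-1}\,\mathrm{sign}(w_1)\,\|v^{[L^{l_2}]}(l_1)\|$, where $w_1=w^{[L^{l_2}]}_{1}(l_1)$ is the height of the single interior vertex of the associated graph. Thus, up to sign and the common factor $l_1^{-n/2-1}$ of Proposition \ref{identificationrk2}, this value is $\sqrt{\Theta_{l_2}(l_1)}=\sqrt{K(l_1)}$, which is exactly the maximal value attained by $L^{l_1}$.

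The hard part will be the sign statement $w_1^{[L^{l_2}]}(l_1)>0$, i.e. that the graph of $L^{l_2}$ stays convex, in the sense of Lemma \ref{lemmaA}, at the new twisting value $l_1$. Clearing the positive denominator $P_E-s\delta$ one checks that, for every single-filter type $L$ and every $l$ with $P_E(l)-s\delta(l)>0$, the sign of $w_1^{[L]}(l)$ coincides with the sign of the destabilising quantity $F_L(l)=(2P_L-P_E)(l)+\delta(l)(s-2\epsilon(L))$ from the stability inequality (\ref{stabilityrk2}). Since $L^{l_2}$ is the Kempf filtration for $l_2$ it produces a strictly positive Kempf value, forcing $w_1^{[L^{l_2}]}(l_2)>0$ and hence $F_{L^{l_2}}(l_2)>0$. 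As the set of possible types $\mathcal{P}$ is finite, there are only finitely many polynomials $F_L$; enlarging $m_5$ if necessary so that it exceeds every real root of each $F_L$ guarantees that each $F_L$ has constant sign on $[m_5,\infty)$, whence $F_{L^{l_2}}(l)>0$ for all $l\geq m_5$ and in particular $F_{L^{l_2}}(l_1)>0$. This gives $w_1^{[L^{l_2}]}(l_1)>0$, so $L^{l_2}$ attains the maximal value $l_1^{-n/2-1}\sqrt{K(l_1)}$ of the Kempf function for the $l_1$-problem. By the uniqueness in Theorem \ref{kempftheoremsheaves}, $L^{l_2}=L^{l_1}$, i.e. the $l_1$- and $l_2$-Kempf filtrations of $E$ coincide.
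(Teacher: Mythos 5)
Your proof is correct and follows essentially the same route as the paper's: regularity (Proposition \ref{regularrk2}) makes $0\subset H^{0}(L^{l_{2}}(l_{1}))\subset H^{0}(E(l_{1}))$ a legitimate competitor in the $l_{1}$-problem whose graph vector is exactly $v^{[L^{l_{2}}]}(l_{1})$, the stabilization $\Theta_{l_{1}}=\Theta_{l_{2}}=K$ shows this competitor attains the maximal Kempf value at $l_{1}$, and uniqueness in Theorem \ref{kempftheoremsheaves} together with generation by global sections yields $L^{l_{1}}=L^{l_{2}}$. The only real difference is that you make explicit, and correctly dispose of (via finiteness of $\mathcal{P}$ and the identity $w_{1}^{[L]}(l)=\frac{l}{P_{E}(l)-s\delta(l)}\,F_{L}(l)$), the sign issue that equality of the squares $\Theta_{l_{2}}(l_{1})=\Theta_{l_{1}}(l_{1})$ pins down the Kempf value of the competitor only up to sign --- a point the paper's proof passes over silently.
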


\begin{pr}
By construction, the filtration
\begin{equation}
  \label{filt1}
  0\subset H^{0}(L^{l_{1}}(l_{1})) \subset H^{0}(E(l_{1}))
\end{equation}
is the $l_{1}$-Kempf filtration of $V\simeq H^{0}(E(l_{1}))$. Now
consider the filtration $V'_{\bullet}\subset V\simeq
H^{0}(E(l_{1}))$ defined as follows
\begin{equation}
\label{filt2}
0\subset H^{0}(L^{l_{2}}(l_{1})) \subset H^{0}(E(l_{1})) \; .
\end{equation}
We have to prove that \eqref{filt2} is, in fact, the $l_{1}$-Kempf
filtration of $V\simeq H^{0}(E(l_{1}))$.

Given that $l_{1},l_{2}\geq m_{5}$ we have
$\Theta_{l_{1}}=\Theta_{l_{2}}=K$. Hence,
$\Theta_{l_{1}}(l_{1})=\Theta_{l_{2}}(l_{1})$ and, by uniqueness
of the Kempf filtration (c.f. Theorem \ref{kempftheoremsheaves}),
filtrations \eqref{filt1} and \eqref{filt2} do coincide. Since,
in particular, $l_{1},l_{2}\geq m_{4}$, $L^{l_{1}}$ and $L^{l_{2}}$
 are $l_{1}$-regular by Proposition \ref{regularrk2}. Hence, $L^{l_{1}}(l_{1})$ and $L^{l_{2}}(l_{1})$ are
 generated by their global sections
 $H^{0}(L^{l_{1}}(l_{1}))$ and $H^{0}(L^{l_{2}}(l_{1}))$, respectively. By the
 previous argument, $H^{0}(L^{l_{1}}(l_{1}))=H^{0}(L^{l_{2}}(l_{1}))$, therefore $L^{l_{1}}(l_{1})=L^{l_{2}}(l_{1})$ and,
tensoring with $\mathcal{O}_{X}(-l_{1})$, this implies that
 the subsheaves $L^{l_{1}} \subset E$
  and $L^{l_{2}} \subset E$ coincide.
\end{pr}

Therefore, Theorem \ref{kempfstabilizesrk2} follows from Proposition \ref{eventuallyrk2}. Hence, eventually, the
Kempf filtration of the $\rk 2$ tensor $(E,\varphi)$ does not depend on the integer $m$.

\begin{dfn}
If $m\geq m_{5}$, the $m$-Kempf filtration of the $\rk 2$ tensor $(E,\varphi)$
$$0\subset (L,\varphi|_{L}) \subset (E,\varphi)$$ is called the \emph{Kempf filtration} or the \emph{Kempf subsheaf} of $(E,\varphi)$.
\end{dfn}

\section{Harder-Narasimhan filtration for $\rk 2$ tensors}
\label{HNdefinition}

Kempf theorem (c.f. Theorem \ref{kempftheoremsheaves}) says that, given an integer $m$ and $V\simeq
H^{0}(E(m))$, there exists a unique weighted filtration of vector spaces $V_{\bullet}\subseteq V$ which gives
maximum for the Kempf function
$$\mu(V_{\bullet},n_{\bullet})=\frac {\sum_{i=1}^{t+1} \frac{\Gamma_i}{\dim V} \big( r^i \dim V - r\dim V^i
+\frac{a_2}{a_1} (\epsilon^i(\overline\Phi)\dim V -s\dim V^i)\big)}
 {\sqrt{\sum_{i=1}^{t+1} {\dim V^{i}} \Gamma_{i}^{2}}}\; .$$ This filtration induces a unique rank $1$ subsheaf $L\subset E$ called
the Kempf subsheaf of the $\rk 2$ tensor $(E,\varphi)$. By
Proposition \ref{eventuallyrk2}, the subsheaf $L$ does not depend
on $m$, for $m\geq m_{5}$.

The Kempf function is a function on $m$ (c.f. Proposition \ref{identificationrk2}). Consider the
function
$$K(m)=m^{\frac{n}{2}+1}\cdot \mu(V_{\bullet},m_{\bullet})=\mu_{v_{m}}(\Gamma)\; .$$
Set $\gamma_{i}=\frac{r}{P}\Gamma_{i}$, then
$\frac{\gamma_{i+1}-\gamma_{i}}{r}=n_{i}$ and $\sum
r^{i}\gamma_{i}=\gamma_{1}+\gamma_{2}=0$, which gives
$\gamma_{1}=-n_{1}$, $\gamma_{2}=n_{1}$. Making the substitutions
for $m$ sufficiently large,
$$\dim V_{1}=\dim V^{1}=h^{0}(L(m))=P_{L}(m)\; ,$$
$$\dim V^{2}=\dim V-\dim V_{1}=h^{0}(E/L(m))=P_{E/L}(m)\; ,$$
we get
$$K(m)=m^{\frac{n}{2}+1}\cdot \frac {\sum_{i=1}^{2} \frac{\gamma_{i}}{r}[(r^i P - rP^i)+\frac{r\delta}{P-s\delta}
 (\epsilon^i P -sP^i)]}
 {\sqrt{\sum_{i=1}^{2} P^{i}\frac{P^{2}}{r^{2}} \gamma_{i}^{2}}}\; ,$$
where we set $P=P_{E}(m)$, $P^{1}=P_{L}(m)$, $P^{2}=P_{E/L}(m)$, $\epsilon^{1}=\epsilon(L)$,
$\epsilon^{2}=s-\epsilon(L)$. Note that $\epsilon^{i}=\epsilon^i(\overline\Phi)=\epsilon^{i}(\varphi)$ and
recall
$$\frac{a_{2}}{a_{1}}=\frac{r\delta}{P-s\delta}\; .$$
Substituting, we get
$$K(m)=m^{\frac{n}{2}+1}\cdot \frac{1}{P-s\delta}\frac {-n_{1}[2(\delta \epsilon^{1}-P^{1})+(P-\delta s)]
+n_{1}[2(\delta\epsilon^{2}-P^{2})+(P-\delta s)]}
 {\sqrt{P^{1}n_{1}^{2}+P^{2}n_{1}^{2}}}=$$
$$m^{\frac{n}{2}+1}\cdot \frac{r}{\sqrt{P}(P-s\delta)}[2P_{L}-P_{E}+\delta(s-2\epsilon(L))]\; .$$
Note that the unique weight $n_{1}$ does not appear in the function later from the substitutions, as it was
expected from a one-step filtration. Also note that the denominator of the function $K$ is positive (c.f.
choice of $m_{2}$ in proof of Proposition \ref{boundednessrk2}). Hence, we can state the following theorem.

\begin{thm}
\label{finalfunction} Given a $\delta$-unstable $\rk 2$ tensor $
(E,\varphi:\overbrace{E\otimes\cdots \otimes E}^{\text{s
times}}\rightarrow M) $, there exists a unique line
subsheaf $L \subset E$ which gives maximum for the polynomial
function
$$K(m)=2P_{L}(m)-P_{E}(m)+\delta(m)(s-2\epsilon(L))\; .$$
\end{thm}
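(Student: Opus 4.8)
The plan is to read the statement off from the machinery already assembled, identifying the Kempf function with the displayed polynomial up to a positive factor that does not depend on the competing subsheaf. First I would invoke Kempf's theorem (Theorem \ref{kempftheoremsheaves}): for each fixed $m\geq m_5$ there is a unique weighted filtration of $V\simeq H^0(E(m))$ maximizing the Kempf function $\mu(V_\bullet,n_\bullet)$. By Corollary \ref{rankrk2} this maximizer is a one-step filtration, and the subsheaf it generates is a single line subsheaf $L=L^m\subset E$; by Proposition \ref{eventuallyrk2} this $L$ is independent of $m$ for $m\geq m_5$. This already singles out the candidate $L$; what remains is to translate ``maximizes the Kempf function'' into ``maximizes $K(m)$''.

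Next I would use the substitutions carried out in the display preceding the statement. For any line subsheaf $L'\subset E$ that is $m$-regular, the subspace $H^0(L'(m))\subset V$ defines a one-step weighted filtration whose Kempf value, by Proposition \ref{taskrk2} together with that computation, equals
$$\mu(V'_\bullet,n'_\bullet)=\frac{r}{\sqrt{P}\,(P-s\delta)}\big(2P_{L'}(m)-P_E(m)+\delta(m)(s-2\epsilon(L'))\big)=\frac{r}{\sqrt{P}\,(P-s\delta)}\,K_{L'}(m),$$
where $P=P_E(m)$ and $K_{L'}(m)$ denotes the polynomial of the statement attached to $L'$. The prefactor $\frac{r}{\sqrt{P}\,(P-s\delta)}$ is positive for $m\geq m_2$ (see the choice of $m_2$ in the proof of Proposition \ref{boundednessrk2}) and, crucially, depends only on $E$ and $m$, not on $L'$. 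Hence, at each fixed $m$ large enough that $L'$ is regular, ordering line subsheaves by $\mu$ coincides with ordering them by the real number $K_{L'}(m)$.

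Then I would conclude. Since $L$ maximizes the Kempf function (Theorem \ref{kempftheoremsheaves}) and is independent of $m$, for every competing line subsheaf $L'$ and every sufficiently large $m$ we obtain $K_{L'}(m)\leq K_L(m)$, with equality only if $H^0(L'(m))=H^0(L(m))$; by $m$-regularity (Proposition \ref{regularrk2}) the latter forces $L'(m)=L(m)$, hence $L'=L$. As this inequality holds for all $m\gg 0$, it yields $K_{L'}\preceq K_L$ for every $L'$, and $K_{L'}\prec K_L$ whenever $L'\neq L$, so $L$ is the unique line subsheaf giving the maximum of $K(m)$ in the sense of $\prec$.

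The main obstacle I anticipate is bookkeeping rather than conceptual. One must ensure that an arbitrary competitor $L'$ genuinely contributes a weighted filtration of $V$ whose value is exactly the stated positive multiple of $K_{L'}(m)$, which requires $\dim H^0(L'(m))=P_{L'}(m)$ and that $H^0(L'(m))$ generates $L'$; both are guaranteed for $m\gg0$ by regularity, with the boundedness of the competing invariants controlled by Proposition \ref{boundednessrk2}. One must also upgrade the pointwise (real-valued) maximization furnished by Kempf's theorem at each large $m$ to the asymptotic polynomial ordering $\prec$, and this upgrade is precisely what the independence of $m$ in Proposition \ref{eventuallyrk2} provides, since it makes the single subsheaf $L$ the simultaneous maximizer for all $m\geq m_5$.
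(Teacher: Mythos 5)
Your proposal is correct and follows essentially the same route as the paper: invoke Kempf's theorem together with Corollary \ref{rankrk2} and Proposition \ref{eventuallyrk2} to get the unique $m$-independent line subsheaf $L$, then use the substitution computation (via Propositions \ref{regularrk2} and \ref{taskrk2}) identifying the Kempf function with the positive, competitor-independent prefactor $\frac{r}{\sqrt{P}(P-s\delta)}$ times $K(m)$. In fact you make explicit the bookkeeping that the paper leaves implicit, namely that every competing line subsheaf $L'$, once $m$-regular, contributes a one-step filtration whose Kempf value is that same multiple of $K_{L'}(m)$, so the pointwise comparison at each large $m$ upgrades to the polynomial ordering $\prec$.
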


If $X$ is a one dimensional complex projective variety, i.e. a smooth projective complex curve, we can simplify
the function $K$. Recall that, by Riemann-Roch, the Hilbert polynomial of a sheaf $E$ of rank $r$ and degree $d$
over a curve of genus $g$ is
$$P_{E}(m)=rm+d+r(1-g)\; ,$$
and the polynomial $\delta(m)$ becomes a positive constant that we
will denote by $\tau$. In this case, a coherent torsion free sheaf
of rank $2$ is a vector bundle of rank $2$ over $X$, and the Kempf
subsheaf will be a line subbundle.

\begin{thm}
\label{finalfunctioncurves} Given a $\tau$-unstable $\rk 2$ tensor
$ (E,\varphi:\overbrace{E\otimes\cdots \otimes E}^{\text{s
times}}\rightarrow M) $ over a smooth projective complex
curve, there exists a unique line subbundle $L \subset E$ which
maximizes the quantity
$$2\deg{L}-\deg{E}+\tau(s-2\epsilon(L))\; .$$
\end{thm}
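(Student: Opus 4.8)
The plan is to derive Theorem \ref{finalfunctioncurves} as the one-dimensional specialization of Theorem \ref{finalfunction}, which has already been established through the stabilization of the $m$-Kempf filtration. First I would invoke Theorem \ref{finalfunction}, which guarantees for a $\delta$-unstable rank $2$ tensor the existence of a unique line subsheaf $L\subset E$ maximizing the polynomial function $K(m)=2P_{L}(m)-P_{E}(m)+\delta(m)(s-2\epsilon(L))$. The task then reduces to computing what this polynomial function becomes when $\dim X=1$, and observing that the maximization problem for the polynomial $K(m)$ collapses to the maximization of a single numerical quantity.

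The key computation uses Riemann-Roch on a smooth projective curve: for a sheaf of rank $r$ and degree $d$ over a curve of genus $g$, the Hilbert polynomial is $P_{E}(m)=rm+d+r(1-g)$. Applying this to both $E$ (rank $2$, degree $\deg E$) and $L$ (rank $1$, degree $\deg L$), the leading terms in $2P_{L}(m)-P_{E}(m)$ cancel, since $2\cdot 1-2=0$, leaving $2P_{L}(m)-P_{E}(m)=2\deg L-\deg E+2(1-g)-2(1-g)=2\deg L-\deg E$, a constant independent of $m$. Meanwhile, since $\dim X-1=0$, the polynomial $\delta(m)$ has degree at most $0$, hence is the positive constant $\tau$. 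Therefore $K(m)=2\deg L-\deg E+\tau(s-2\epsilon(L))$ is itself constant in $m$, and maximizing the polynomial function $K(m)$ in the sense of the ordering $\prec$ is exactly maximizing this constant.

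I would then note two simplifications specific to the curve case that are already flagged in the preceding paragraph of the excerpt: over a smooth projective curve a rank $2$ torsion free sheaf is automatically locally free, so $E$ is a rank $2$ vector bundle, and the Kempf subsheaf, being a rank $1$ saturated subsheaf of a bundle on a curve, is a line \emph{subbundle} rather than merely a subsheaf. The uniqueness of $L$ transfers directly from Theorem \ref{finalfunction}, since the constant $2\deg L-\deg E+\tau(s-2\epsilon(L))$ is precisely the value that the unique maximizing $L$ achieves.

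I do not expect any genuine obstacle here, as the theorem is essentially a corollary obtained by substituting the Riemann-Roch formula into the already-proven Theorem \ref{finalfunction}; the only point requiring a small amount of care is confirming that the rank $1$ subsheaf produced by the general construction is genuinely a subbundle in the curve setting, which follows because saturating a rank $1$ subsheaf of a locally free sheaf on a smooth curve only increases its degree and yields a subbundle, and the maximizing property of $2\deg L-\deg E+\tau(s-2\epsilon(L))$ forces $L$ to already be saturated.
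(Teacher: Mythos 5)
Your proposal is correct and follows essentially the same route as the paper: the paper also obtains Theorem \ref{finalfunctioncurves} by specializing Theorem \ref{finalfunction} to $\dim X=1$, substituting the Riemann--Roch form $P_{E}(m)=rm+d+r(1-g)$ so that $2P_{L}(m)-P_{E}(m)=2\deg L-\deg E$ and $\delta(m)$ becomes the constant $\tau$, and noting that on a curve the torsion free sheaf $E$ is a vector bundle and the maximizing rank $1$ subsheaf is a line subbundle. Your saturation argument just makes explicit the paper's assertion that the Kempf subsheaf is a subbundle, so there is no substantive difference.
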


Note that, if the tensor is unstable, such quantity will be positive, and the graph corresponding to the filtration will be \textit{a cusp} which is a convex graph.

If we define the \emph{corrected Hilbert polynomials} of $(E,\varphi)$ and $(L,\varphi|_{L})$ as
$$\overline{P}_{E}=P_{E}-\delta s\; ,$$
$$\overline{P}_{L}=P_{L}-\delta\epsilon(L)\; ,$$
we can rewrite the notion of stability for $\rk 2$ tensors (c.f. Definition \ref{stabilityrk2}): a $\rk 2$
tensor $(E,\varphi)$ is \emph{$\delta$-unstable} if there exists a line subsheaf $L\subset E$ such that
$$\frac{\overline{P}_{L}}{\rk L}>\frac{\overline{P}_{E}}{\rk E}\Leftrightarrow \overline{P}_{L}>
\frac{\overline{P}_{E}}{2}\; .$$

Theorem \ref{finalfunction} establishes that there exists a unique
subsheaf, the Kempf subsheaf, maximizing certain polynomial
function. This is equivalent to contradict, in a maximal way, the
definition of stability (c.f. Definition \ref{stabilityrk2}).
Therefore, we can define a notion of a Harder-Narasimhan
filtration for $\delta$-unstable $\rk 2$ tensors as this unique
line subsheaf which maximally contradicts GIT stability.

\begin{dfn}
\label{HNrk2tensors} If $(E,\varphi)$ is a $\delta$-unstable $\rk 2$ tensor, there exists a unique $\rk 1$ subsheaf
maximizing
$$2\cdot \overline{P}_{L}-\overline{P}_{E}>0\; .$$
We call
$$0\subset (L,\varphi|_{L})\subset (E,\varphi)$$ the \emph{Harder-Narasimhan filtration} of $(E,\varphi)$, and we
call $L$ the \emph{Harder-Narasimhan subsheaf} of $(E,\varphi)$.
\end{dfn}

\begin{rem}
\label{whynot}
We do not know, in principle, how to define a quotient tensor $(E/L,\overline{\varphi}|_{E/L})$, because we do
not know, a priori, how to define $\overline{\varphi}|_{E/L}$. This is why we cannot talk about quotient
tensors.

Given the exact sequence of sheaves, $0\rightarrow L\rightarrow E\rightarrow E/L\rightarrow 0$, we define the
corrected Hilbert polynomial of the quotient as $\overline{P}_{E/L}=\overline{P}_{E}-\overline{P}_{L}$, and we
have, trivially, the additivity of the corrected polynomials on exact sequences of sheaves. This way we can
consider that Definition \ref{HNrk2tensors} contains the analogous to the conditions of the classical
Harder-Narasimhan filtration for sheaves, in the case of $\rk 2$ tensors. Indeed,
$$2\cdot \overline{P}_{L}-\overline{P}_{E}>0\Leftrightarrow \overline{P}_{L}>\overline{P}_{E/L}\; ,$$
and the semistability of $(L,\varphi|_{L})$ and
$(E/L,\overline{\varphi}|_{E/L})$ (whichever definition of
$\overline{\varphi}|_{E/L}$ we impose), would follow trivially
from the fact that they are rank $1$ tensors.

Therefore, Definition \ref{HNrk2tensors} gives a notion of a Harder-Narasimhan filtration with the properties we
would expect it to have.
\end{rem}

\section{Stable coverings of a projective curve}
\label{coverings}
In this section we use the previous notions for
$\rk 2$ tensors over curves where the morphism is symmetric, and
the Definition \ref{HNrk2tensors} of the Harder-Narasimhan
subsheaf, to define stable coverings of a projective curve and,
for the unstable ones, a maximally destabilizing object, in terms
of intersection theory.

In the following, we shall consider tensors $(E,\varphi)$ where $E$ is a $\rk 2$ vector bundle over a
smooth complex projective curve $X$, and $$\varphi:\overbrace{E\otimes\cdots \otimes E}^{\text{s times}}\rightarrow M$$ 
is a symmetric non degenerate morphism. We call it a \emph{symmetric non degenerate rank $2$
tensor}. The non degeneracy condition means that $\varphi$ induces an injective morphism
$$E\hookrightarrow (\overbrace{E\otimes\cdots \otimes E}^{\text{(s-1) times}})^{\vee}\otimes M\; .$$
Let $\tau$ be a positive real number. Let $\mathbb{P}(E)$ be the projective space bundle of the vector
bundle $E$, which is a ruled algebraic surface (c.f. \cite[Section V.2]{Ha}).

The morphism $\varphi$ is, fiberwise, a symmetric multilinear map
$$\varphi_{x}:\overbrace{V\otimes\cdots \otimes V}^{\text{s times}}\rightarrow
\mathbb{C}\; ,$$ where $V\simeq \mathbb{C}^{2}$. Then, $\varphi_{x}$
factors through $\Sym^{s}(V)$, isomorphic to the
$(s+1)$-dimensional vector space of homogeneous polynomials of
degree $s$ in two variables. Hence, fiberwise, $\varphi$ can be
represented by a polynomial
\begin{equation}
\label{tensorfiber} \varphi_{x}\equiv
\sum_{i=0}^{s}a_{i}(x)X_{0}^{i}X_{1}^{s-i}
\end{equation}  which
vanishes on $s$ points in
$\mathbb{P}(V)\simeq\mathbb{P}_{\mathbb{C}}^{1}$. Therefore, as
$\varphi$ varies on $X$, it defines a degree $s$ covering
$$\mathbb{P}(E)\supset X'\rightarrow
X\; .$$

Suppose that $(E,\varphi)$ is a $\tau$-unstable $\rk 2$ tensor. Then, by
Theorem \ref{finalfunctioncurves}, there exists a line subbundle
$L\subset E$, the \emph{Harder-Narasimhan subbundle}, giving maximum for the quantity
\begin{equation}
\label{rk2stabexpression}
2\deg(L)-\deg (E)+\tau(s-2\epsilon(L))\; .
\end{equation}

The subbundle $L$ can be seen as a section of $\mathbb{P}(E)$,
each fiber $L_{x}$ corresponding to a point $P=\{L_{x}\}\in
\mathbb{P}_{\mathbb{C}}^{1}$. Recall from Definition
\ref{stabilitytensorsrk2} that $\epsilon (L)=k$ if
$\varphi|_{L^{\otimes (k+1)}\otimes E^{\otimes (s-k-1)}}= 0$ and
$\varphi|_{L^{\otimes k}\otimes E^{\otimes (s-k)}}\neq 0$. Note
that here we use the symmetry of the morphism $\varphi$.
Therefore, $\epsilon(L)=k$ means that, generically, $P=\{L_{x}\}$
is a zero of multiplicity $s-k$ and, by definition of the covering
$X'\rightarrow X$, $s-\epsilon(L)$ is, exactly, the number of
branches of $X'$ which generically do coincide with the section
defined by $L$, counted with multiplicity.

We can find in \cite{Gi} the classical example of classifying a
configuration of points in $\mathbb{P}_{\mathbb{C}}^{1}$
 up to the action of $\PGL(2)$. There, a homogeneous polynomial of degree $N$,
$P=\underset{i}\sum a_{i}X_{0}^{i}X_{1}^{N-i}$, is unstable if it
contains a linear factor of degree greater that $\frac{N}{2}$.
Now, observe that the restriction of a $\rk 2$ tensor to a point
$x\in X$ in (\ref{tensorfiber}), passing to the projectivization
$\mathbb{P}(E)$ hence fibers are isomorphic to
$\mathbb{P}_{\mathbb{C}}^{1}$, is precisely one of the homogeneous
polynomials in \cite{Gi}. Fiberwise, the morphism $\varphi$
defines a set of $s$ points in $\mathbb{P}_{\mathbb{C}}^{1}$. See
that, from the point of view of \cite{Gi}, letting $s=N$, the set
of points is unstable if there exists a point with multiplicity
greater that $\frac{s}{2}$.

Then, as $s-\epsilon(L)$ is the multiplicity of the point defined
by the line $L_{x}$ (the fiber of the Harder-Narasimhan subbundle
over $x$), in the set of $s$ points defined by the morphism
$\varphi$, following the previous argument, this point $\{L_{x}\}$
will destabilize the set if
$$s-\epsilon(L)>\frac{s}{2}\Leftrightarrow s-2\epsilon (L)>0\; ,$$
which is the second summand in (\ref{rk2stabexpression}). Hence,
the positivity of $s-2\epsilon(L)$ is equivalent to the line
bundle $L$ defining a point in the fiber
$\mathbb{P}_{\mathbb{C}}^{1}$, which coincides with one of the
zeroes of $\varphi$ in the fiber, and such that the zero has
multiplicity greater that $\frac{s}{2}$.

To conclude, we can say that the expression
(\ref{rk2stabexpression}) consists of two summands weighted by the
parameter $\tau$. First one, $2\deg(L)-\deg(E)$, is measuring the
stability of the vector bundle $E$. Second one, $s-2\epsilon(L)$,
is measuring the stability of the morphism or, with the previous
observations, the generic stability of the set of points defined
in $\mathbb{P}_{\mathbb{C}}^{1}$, fiberwise, as in \cite{Gi}, when
varying along the covering. Therefore, an object destabilizing a $\rk 2$ tensor is an object which contradicts these two
stabilities, weighted by $\tau$, and the Harder-Narasimhan
subbundle is the unique one which maximally does, for a
$\tau$-unstable tensor.

The sets of points in each fiber defined by $\varphi$ give a
covering of degree $s$,
$$\mathbb{P}(E)\supset X'\rightarrow X\; .$$
In the following, we rewrite the stability of the sets of points, fiberwise, as stability for the covering, using intersection theory for ruled surfaces.

\begin{prop}\cite[Proposition V.2.8]{Ha}
\label{normalized}
Given a ruled surface $\mathbb{P}(E)$, there
exists $E'\simeq E\otimes N$, with $N$ line bundle, such that
$H^{0}(E')\neq 0$ but for all line bundles $N'$ with negative
degree we have $H^{0}(E'\otimes N')=0$. Therefore,
$\mathbb{P}(E)=\mathbb{P}(E')$ and the integer $e=-\deg E'$ is an
invariant of the ruled surface. Furthermore, in this case, there
exists a section $\sigma_{0}:X\rightarrow \mathbb{P}(E')$ with
image $C_{0}$, such that $\mathcal{L}(C_{0})\simeq
\mathcal{O}_{X}(1)$.
\end{prop}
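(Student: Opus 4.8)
The plan is to prove the three assertions of the proposition in turn: the existence of a normalized twist $E'$, the invariance of $e=-\deg E'$, and the description of the distinguished section $C_0$.

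First I would establish the existence of $E'$ by a minimal-twist argument. Consider the set $S=\{N\in\Pic(X):H^0(E\otimes N)\neq 0\}$. A nonzero element of $H^0(E\otimes N)$ is a nonzero sheaf morphism $\mathcal{O}_X\to E\otimes N$, equivalently an injection $N^{-1}\hookrightarrow E$ (injective because a nonzero map out of a rank-$1$ torsion-free sheaf on a curve has torsion-free, hence zero, kernel). Thus each $N\in S$ yields a line subsheaf of $E$ of degree $-\deg N$, and since the degrees of line subsheaves of a fixed bundle on a curve are bounded above by $\mu_{\max}(E)$, the degrees of elements of $S$ are bounded below. On the other hand $S$ is nonempty, since Riemann--Roch gives $\chi(E\otimes N)>0$ for $\deg N\gg 0$. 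Choosing $N\in S$ of minimal degree and setting $E'=E\otimes N$, we get $H^0(E')\neq 0$; and for any $N'$ with $\deg N'<0$ we have $\deg(N\otimes N')<\deg N$, so $N\otimes N'\notin S$ by minimality, i.e. $H^0(E'\otimes N')=0$. This is the required normalization.

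Next, $\mathbb{P}(E)\cong\mathbb{P}(E')$ over $X$ is the standard invariance of $\Proj(\Sym(-))$ under twisting by a line bundle. For the invariance of $e$, I would use the standard fact that any two rank-$2$ bundles presenting the same ruled surface differ by a line-bundle twist, $E''\cong E'\otimes N_0$; applying the normalization inequalities to both $E'$ and $E''$ then forces $\deg N_0\geq 0$ (from $H^0(E'\otimes N_0)\neq 0$ and normalization of $E'$) and $\deg N_0\leq 0$ (from $H^0(E''\otimes N_0^{-1})=H^0(E')\neq 0$ and normalization of $E''$), hence $\deg N_0=0$ and $\deg E''=\deg E'+2\deg N_0=\deg E'$. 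Thus $e=-\deg E'$ depends only on $\mathbb{P}(E)$.

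Finally, for the section I would exploit $\pi_*\mathcal{O}_{\mathbb{P}(E')}(1)\cong E'$, so that $H^0(\mathbb{P}(E'),\mathcal{O}_{\mathbb{P}(E')}(1))\cong H^0(X,E')\neq 0$. A nonzero $s\in H^0(E')$ thereby determines an effective divisor $C_0=\{s=0\}$ on the surface with $\mathcal{L}(C_0)\cong\mathcal{O}_{\mathbb{P}(E')}(1)$ (the tautological bundle, written $\mathcal{O}_X(1)$ in the statement). It remains to see that $C_0$ is a section of $\pi$, and the key is that $s$, viewed as a map $\mathcal{O}_X\to E'$, is nowhere vanishing: if it vanished on an effective divisor $D>0$ it would yield a nonzero map $\mathcal{O}_X(D)\to E'$, i.e. $H^0(E'\otimes\mathcal{O}_X(-D))\neq 0$ with $\deg\mathcal{O}_X(-D)<0$, contradicting normalization. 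Being nowhere vanishing, $s$ restricts on each fibre $\mathbb{P}(E'_x)\cong\mathbb{P}^1$ to a nonzero linear form, vanishing at a single point; hence $C_0$ meets every fibre exactly once and is the image of a section $\sigma_0\colon X\to\mathbb{P}(E')$. The main obstacle is precisely this last step: correctly translating the normalization hypothesis into the nowhere-vanishing of $s$, and matching the $\Proj(\Sym(-))$ conventions so that the global section of $\mathcal{O}(1)$ attached to $s$ cuts out the section $C_0$ rather than a fibre component. By contrast, the boundedness of subsheaf degrees in the first step and the pushforward identity $\pi_*\mathcal{O}(1)=E'$ are standard inputs I would cite rather than reprove.
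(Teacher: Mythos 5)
Your proof is correct, but note that the paper does not prove this proposition at all: it is quoted verbatim (with a notational clash, $\mathcal{O}_X(1)$ meaning the tautological bundle $\mathcal{O}_{\mathbb{P}(E')}(1)$) from Hartshorne, \cite[Proposition V.2.8]{Ha}, and used as a black box. Your argument --- minimal-degree twist for existence, the two-sided normalization inequalities for invariance of $e$, and the nowhere-vanishing of a global section of $E'$ (forced by normalization) to produce the section $C_0$ --- is essentially a faithful reconstruction of Hartshorne's own proof, so there is nothing to compare against within the paper itself.
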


\begin{dfn}
Let $(E,\varphi:\overbrace{E\otimes\cdots \otimes E}^{\text{s times}}\rightarrow M)$ be a symmetric non
degenerate rank $2$ tensor over $X$. We call $(E',\varphi')$ an \emph{associated normalized tensor} where
$E'=E\otimes N$, $N$ a line bundle as in Proposition \ref{normalized}, and $\varphi'$ is the induced morphism given by
$$\varphi':(E')^{\otimes s}=E^{\otimes
s}\otimes N^{\otimes s}\rightarrow M\otimes N^{\otimes s}\; ,$$
and extending by the identity on $N^{\otimes s}$. 
\end{dfn}

\begin{prop}
\label{allnormalized}
The quantity in (\ref{rk2stabexpression}) is an invariant for all associated normalized tensors. Hence, $(E,\varphi)$ is 
$\tau$-unstable if and only if an associated normalized tensor $(E',\varphi')$ is $\tau$-unstable. 
\end{prop}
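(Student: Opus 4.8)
The plan is to show that tensoring by the line bundle $N$ provides a bijection between the line subbundles of $E$ and those of $E'$ under which both constituents of the quantity (\ref{rk2stabexpression}) are preserved. Once this is established, the maximum of (\ref{rk2stabexpression}) over all line subbundles is the same for $(E,\varphi)$ and for any associated normalized tensor $(E',\varphi')$, and the equivalence of $\tau$-unstability follows at once from Theorem \ref{finalfunctioncurves} and the stability condition of Definition \ref{stabilitytensorsrk2} specialized to curves (where $\delta=\tau$ is constant and $2P_L-P_E=2\deg L-\deg E$).

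First I would record the bijection. Since $-\otimes N$ is an autoequivalence of the category of coherent sheaves on $X$ (with inverse $-\otimes N^{-1}$), it carries a line subbundle $L\subset E$, i.e.\ a rank $1$ subsheaf with locally free quotient, to a line subbundle $L':=L\otimes N\subset E\otimes N=E'$, and this assignment is a bijection between the line subbundles of $E$ and those of $E'$. For the degree term I would compute, using $\rk E=2$,
$$2\deg L'-\deg E'=2(\deg L+\deg N)-(\deg E+2\deg N)=2\deg L-\deg E,$$
so the contributions of $\deg N$ cancel and the vector-bundle part of (\ref{rk2stabexpression}) is unaffected.

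The heart of the argument, and the only step demanding care, is the invariance $\epsilon(L')=\epsilon(L)$. Here I would use that $\varphi'$ is by definition $\varphi\otimes\operatorname{id}_{N^{\otimes s}}$, together with the identification
$$(L')^{\otimes k}\otimes (E')^{\otimes(s-k)}=\big(L^{\otimes k}\otimes E^{\otimes(s-k)}\big)\otimes N^{\otimes s}$$
as subsheaves of $(E')^{\otimes s}=E^{\otimes s}\otimes N^{\otimes s}$. Restricting $\varphi'$ to this subsheaf gives $\big(\varphi|_{L^{\otimes k}\otimes E^{\otimes(s-k)}}\big)\otimes\operatorname{id}_{N^{\otimes s}}$, and since $N^{\otimes s}$ is invertible, tensoring a morphism by $\operatorname{id}_{N^{\otimes s}}$ preserves both vanishing and non-vanishing. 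Comparing this with the characterization $\epsilon(L)=k$ precisely when $\varphi|_{L^{\otimes(k+1)}\otimes E^{\otimes(s-k-1)}}=0$ and $\varphi|_{L^{\otimes k}\otimes E^{\otimes(s-k)}}\neq 0$ (invoking the symmetry of $\varphi$ so that the order of tensor factors is irrelevant), the two multiplicities agree.

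Combining these steps, (\ref{rk2stabexpression}) takes the same value on $L$ and on $L'=L\otimes N$; as $L\mapsto L'$ is a bijection, the maximum over all line subbundles agrees for $(E,\varphi)$ and $(E',\varphi')$. Because this holds for an \emph{arbitrary} line bundle $N$, the value is in particular the same for every associated normalized tensor, establishing the claimed invariance. Finally, the sign of that maximum is thereby preserved, so by Theorem \ref{finalfunctioncurves} the tensor $(E,\varphi)$ is $\tau$-unstable if and only if $(E',\varphi')$ is, which completes the proof.
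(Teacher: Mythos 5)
Your proof is correct and follows essentially the same route as the paper's: tensor the line subbundle $L$ by $N$, observe that $2\deg L-\deg E$ is unchanged by the degree computation, and that $\epsilon(L\otimes N)=\epsilon(L)$ since $\varphi'=\varphi\otimes\id_{N^{\otimes s}}$. The only difference is that you spell out what the paper merely asserts, namely the justification of the $\epsilon$-invariance and the bijectivity of $L\mapsto L\otimes N$ needed to compare maxima, which is a welcome but not essentially different elaboration.
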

\begin{pr}
Let $N$
be a line bundle over $X$, as in Proposition \ref{normalized}. If we change $E$ by $E'=E\otimes N$,
then we have the line subbundle $L\otimes N\subset E'$ (by
exactness of the tensor product with locally free sheaves), and
$$\deg(E')=\deg(E\otimes N)=\deg(E)+2\deg(N)\; ,$$
$$\deg(L\otimes N)=\deg(L)+\deg(N)\; ,$$
so the quantity $2\deg(L)-\deg(E)$ is invariant by tensoring $E$
with a line bundle. 

Also note that, after defining  
$$\varphi':(E')^{\otimes s}=E^{\otimes
s}\otimes N^{\otimes s}\rightarrow M\otimes N^{\otimes s}\; ,$$ it
is $\epsilon'(L\otimes N)=\epsilon(L)$.

Hence, the quantity
$$2\deg(L)-\deg (E)+\tau(s-2\epsilon(L))$$
remains the same for associated normalized tensors. 
\end{pr}

Let $\mathbb{P}(E')$ be a ruled surface with $E'$ normalized as in Proposition \ref{normalized}. Let
$\sigma:X\rightarrow \mathbb{P}(E)$ be a section, and let $D=\im
\sigma$ be a divisor on $\mathbb{P}(E)$. It can be proved that
$\deg(\sigma)=-e-C_{0}\cdot D$, with these conventions (c.f.
\cite[Proposition V.2.9]{Ha}). Note that the section $C_{0}$ depends on the line bundle $N$ in Proposition \ref{normalized}, but 
the number $\deg(\sigma)=-e-C_{0}\cdot D$ does not. Let us define, by analogy,
$\epsilon(\sigma)=\epsilon(D)$ as the number of branches of $X'$
which generically do coincide with $D$, the divisor defined by
$\sigma$, counted with multiplicity.

\begin{dfn}
Let $f:X'\rightarrow X$ be a covering defined by a normalized symmetric non
degenerate rank $2$ tensor $(E,\varphi)$,
$X'\subset \mathbb{P}(E)$. Let $C_{0}$ be the image of a section $\sigma_{0}:X\rightarrow \mathbb{P}(E)$ such that $\mathcal{L}(C_{0})\simeq
\mathcal{O}_{X}(1)$. Let $\tau$ be a positive number. We say that $f$ is \emph{$\tau$-unstable} if there
exists a section $\sigma:X\rightarrow \mathbb{P}(E)$ with image $D$, i.e. there exists a line subbundle
$L\subset E$, such that the following holds
\begin{equation}
\label{stabintersection}
-2C_{0}\cdot D-e+\tau(s-2\epsilon(D))>0\; . 
\end{equation}
\end{dfn}

\begin{prop}
Let $\tau$ be a positive number. A symmetric non degenerate $\rk 2$ tensor $(E,\varphi)$ is $\tau$-unstable if and only if the associated
covering $f:X'\rightarrow X$ is $\tau$-unstable.
\end{prop}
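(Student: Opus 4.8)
The plan is to reduce both notions of instability to the single inequality comparing the same line subbundle against $(E,\varphi)$, and then to verify that the two expressions being compared agree term by term. First I would invoke Proposition \ref{allnormalized} to replace $E$ by an associated normalized bundle $E'$: since $\mathbb{P}(E)=\mathbb{P}(E')$, the covering $f\colon X'\to X$ and its branch data are unchanged, while the quantity (\ref{rk2stabexpression}) is invariant, so neither notion of $\tau$-instability is affected. Thus I may assume $E$ is normalized, with invariant $e=-\deg E$ and distinguished section $C_0$ as in Proposition \ref{normalized}, and I recall from Theorem \ref{finalfunctioncurves} that $(E,\varphi)$ is $\tau$-unstable precisely when some line subbundle makes (\ref{rk2stabexpression}) positive.

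Next I would set up the dictionary between line subbundles and sections: a line subbundle $L\subset E$ is exactly a section $\sigma\colon X\to\mathbb{P}(E)$ with image $D=\im\sigma$, the fibre $L_x$ being the point of $\mathbb{P}(E_x)$ cut out by $\sigma$. Under this bijection the destabilizing object of Theorem \ref{finalfunctioncurves} and the destabilizing section in the definition of covering $\tau$-instability are the same, so it remains to match the two numerical expressions for a fixed pair $L\leftrightarrow D$. The degree term is handled by intersection theory on the ruled surface: combining the formula $\deg\sigma=-e-C_0\cdot D$ recalled above with the identification $\deg\sigma=\deg L$ coming from the section–subbundle correspondence, one obtains the key identity $2\deg L-\deg E=-2\,C_0\cdot D-e$, which converts the first summand of (\ref{rk2stabexpression}) into the first two summands of (\ref{stabintersection}).

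For the morphism term I would use the fibrewise description of the covering. By construction $X'$ is the zero scheme of $\varphi$, and the preceding discussion shows that, fibrewise, the invariant records the multiplicity with which the point $\{L_x\}$ occurs among the $s$ zeros of $\varphi_x$ in $\mathbb{P}(E_x)\cong\mathbb{P}_{\CC}^{1}$. Hence the branch count defining $\epsilon(D)$ and the morphism invariant $\epsilon(L)$ of Definition \ref{stabilitytensorsrk2} refer to the same local data, so the terms $s-2\epsilon(D)$ and $s-2\epsilon(L)$ coincide. With both identities in hand, the inequality (\ref{stabintersection}) for a section $\sigma$ with image $D$ is term by term the inequality that $L$ destabilizes $(E,\varphi)$ in the sense of Theorem \ref{finalfunctioncurves}; passing to the supremum over all sections (equivalently, all line subbundles) yields the stated equivalence, with the maximizing $L$ of Theorem \ref{finalfunctioncurves} corresponding to the maximizing section.

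The step I expect to be the main obstacle is the bookkeeping in these two translations, and specifically fixing conventions so that the signs come out consistently. One must pin down which projectivization convention is in force so that the section attached to $L$ satisfies $\deg\sigma=\deg L$ rather than $\deg(E/L)$, and so that the branch multiplicity at $\{L_x\}$ is matched to $\epsilon(L)$ rather than to its complement $s-\epsilon(L)$; a slip in either translation would flip precisely the coefficient of $\tau$. Once the correspondence $L\leftrightarrow D$ is normalized so that the degree identity $2\deg L-\deg E=-2\,C_0\cdot D-e$ and the branch identification hold simultaneously, the equivalence is immediate, but checking that a single consistent choice produces both is the delicate point of the argument.
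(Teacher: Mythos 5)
Your proof is correct and follows essentially the same route as the paper's: reduce to the normalized case via Proposition \ref{allnormalized}, then identify (\ref{stabintersection}) with (\ref{rk2stabexpression}) term by term, using the intersection-theoretic degree formula $\deg\sigma=-e-C_{0}\cdot D$ (which, since $\deg E'=-e$, gives $2\deg L-\deg E'=-2C_{0}\cdot D-e$) together with the fiberwise branch-count interpretation of $\epsilon$. The convention issue you flag in your last paragraph is real — read literally, the paper defines $\epsilon(D)$ as the number of branches coinciding with $D$, which by the paper's own earlier discussion equals $s-\epsilon(L)$ and would flip the sign of the $\tau$-term, so the proposition only holds under the intended reading (the one you adopt) that $\epsilon(D)$ is matched to $\epsilon(L)$ itself — but the paper's proof glosses over exactly this same point, so your treatment is, if anything, more careful than the original.
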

\begin{pr}
By Proposition \ref{allnormalized} we can suppose that $(E,\varphi)$ is normalized and $\tau$-unstable. Then, we just have to notice
that expression (\ref{stabintersection}) corresponds to (\ref{rk2stabexpression}) by the previous
discussion, and does also not change by passing to a normalized associated tensor. 
\end{pr}

Finally, as we announced, we do characterize the Harder-Narasimhan filtration, in this case, in terms of
intersection theory. This last theorem follows from the previous results.

\begin{thm}
If $f:X'\rightarrow X$ is a degree $s$ covering coming from a symmetric non degenerate $\rk 2$ tensor
$(E,\varphi)$ which is $\tau$-unstable, then there exists a unique section
$\sigma:X\rightarrow \mathbb{P}(E)$ with image $D$, giving maximum
for
$$-2C_{0}\cdot D-e+\tau(s-2\epsilon(D))\, .$$
We call $\sigma$ the \emph{Harder-Narasimhan section} of the covering.
\end{thm}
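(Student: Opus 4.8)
The plan is to transport the uniqueness-and-maximality statement of Theorem \ref{finalfunctioncurves} across the dictionary between line subbundles of $E$ and sections of the ruled surface $\mathbb{P}(E)$, and then to check that the two extremal quantities agree numerically. First I would invoke Proposition \ref{allnormalized} to reduce to the case where $E=E'$ is normalized as in Proposition \ref{normalized}, so that $\deg E = -e$ and a distinguished section $\sigma_0$ with image $C_0$ satisfying $\mathcal{L}(C_0)\simeq \mathcal{O}_X(1)$ is available. This reduction is legitimate precisely because Proposition \ref{allnormalized} shows that both the quantity (\ref{rk2stabexpression}) and the property of being $\tau$-unstable are invariant under passing to an associated normalized tensor, so nothing is lost in assuming $(E,\varphi)$ normalized from the outset.

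Next, since $(E,\varphi)$ is $\tau$-unstable, Theorem \ref{finalfunctioncurves} produces a \emph{unique} line subbundle $L\subset E$ maximizing $2\deg(L)-\deg(E)+\tau(s-2\epsilon(L))$. I would then use the standard correspondence (c.f. \cite[Section V.2]{Ha}) between line subbundles $L\subset E$ and sections $\sigma:X\to\mathbb{P}(E)$, writing $D=\im\sigma$: each subbundle $L$ determines a section whose image $D$ records fiberwise the point $\{L_x\}\in\mathbb{P}^1_{\mathbb{C}}$, and conversely, so the correspondence $L\leftrightarrow\sigma$ is a bijection onto sections.

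The heart of the argument is the numerical identity between the two expressions. Using $\deg(\sigma)=\deg(L)=-e-C_0\cdot D$ (c.f. \cite[Proposition V.2.9]{Ha}) together with $\deg(E)=-e$ for the normalized bundle, one computes
$$2\deg(L)-\deg(E)=2(-e-C_0\cdot D)-(-e)=-2C_0\cdot D-e\; ,$$
while the branch-count interpretation discussed after Theorem \ref{finalfunctioncurves} (and in Definition \ref{stabilitytensorsrk2}) gives $\epsilon(L)=\epsilon(D)$, since $s-\epsilon(L)$ and $s-\epsilon(D)$ both count the branches of $X'$ generically coinciding with the section, with multiplicity. Substituting these, the tensor quantity $2\deg(L)-\deg(E)+\tau(s-2\epsilon(L))$ equals \emph{exactly} the intersection quantity $-2C_0\cdot D-e+\tau(s-2\epsilon(D))$ appearing in (\ref{stabintersection}). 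Consequently, maximizing the latter over sections $\sigma$ is the same problem as maximizing the former over line subbundles $L$, and the unique maximizer $L$ furnished by Theorem \ref{finalfunctioncurves} corresponds under the bijection to a unique maximizing section $\sigma$, which we call the Harder-Narasimhan section.

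I expect the only genuinely delicate point to be the bookkeeping in this numerical dictionary: pinning down the correct signs and the roles of $C_0$ and $e$ under the chosen convention (in which points of $\mathbb{P}(E)$ are lines in the fibers of $E$), and confirming $\epsilon(L)=\epsilon(D)$ from the branch count rather than merely by definition. There is no substantive new difficulty, since the existence and uniqueness of the extremal object is already guaranteed by Theorem \ref{finalfunctioncurves}; the final statement is the translation of that result into the language of intersection theory on $\mathbb{P}(E)$.
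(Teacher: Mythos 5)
Your proposal is correct and follows essentially the same route as the paper: the paper's (implicit) proof is exactly the assembly of Proposition \ref{allnormalized} (reduction to a normalized tensor), Theorem \ref{finalfunctioncurves} (existence and uniqueness of the maximizing line subbundle), the bijection between line subbundles and sections, and the dictionary $\deg(\sigma)=-e-C_{0}\cdot D$, $\epsilon(L)=\epsilon(D)$. Your explicit check of the identity $2\deg(L)-\deg(E)=2(-e-C_{0}\cdot D)-(-e)=-2C_{0}\cdot D-e$ is precisely the computation the paper leaves unstated when it says the theorem ``follows from the previous results.''
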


\end{document}